\newtheorem{theorem}{Theorem}[section]
\newtheorem{fact}{Fact}[section]
\newtheorem{corollary}{Corollary}[section]
\newtheorem{lemma}{Lemma}[section]
\newtheorem{proposition}{Proposition}[section]
\theoremstyle{definition}
\newtheorem{definition}{Definition}[section]
\newtheorem{example}{Example}[section]
\newtheorem{remark}{Remark}[section]
\newtheorem{question}{Question}[section]
\numberwithin{equation}{section}
\definecolor{myblue}{rgb}{.9, .9,.9}
\newcommand*\mybluebox[1]{
	\colorbox{myblue}{\hspace{1em}#1\hspace{1em}}}
\crefname{equation}{}{}
\crefname{chapter}{Chapter}{Chapters}
\crefname{item}{item}{items}
\crefname{figure}{Figure}{Figures}
\crefname{theorem}{Theorem}{Theorems}
\crefname{lemma}{Lemma}{Lemmas}
\crefname{proposition}{Proposition}{Propositions}
\crefname{corollary}{Corollary}{Corollarys}
\crefname{definition}{Definition}{Definitions}
\crefname{fact}{Fact}{Facts}
\crefname{example}{Example}{Examples}
\crefname{algorithm}{Algorithm}{Algorithms}
\crefname{remark}{Remark}{Remarks}
\crefname{note}{Note}{Notes}
\crefname{notation}{Notation}{Notations}
\crefname{case}{Case}{Cases}
\crefname{exercise}{Exercise}{Exercises}
\crefname{question}{Question}{Questions}
\crefname{claim}{Claim}{Claims}
\crefname{enumi}{}{}
\newcommand{\cone}{\ensuremath{\operatorname{cone}}}
\newcommand{\Pro}{\ensuremath{\operatorname{P}}}
\newcommand{\scal}[2]{\left\langle{#1},{#2}  \right\rangle}
\providecommand{\abs}[1]{\lvert#1\rvert}
\providecommand{\norm}[1]{\lVert#1\rVert}
\providecommand{\innp}[1]{\langle#1\rangle}
\providecommand{\biginnp}[1]{\big\langle#1\big\rangle}
\begin{document}
\setcounter{page}{1}

\vspace*{1.0cm}
\title[running title]
{On  angles between convex cones}
\author[H.~H.\ Bauschke, H.~Ouyang, and X.~Wang]{Heinz H.\ Bauschke$^{1,*}$, Hui\ Ouyang$^1$, Xianfu\ Wang$^{1}$}
\maketitle
\vspace*{-0.6cm}

\begin{center}
{\footnotesize {\it
$^1$Department of Mathematics, University of British Columbia,  Kelowna, B.C.\ V1V~1V7, Canada.
}}\end{center}

\vskip 4mm {\small\noindent {\bf Abstract.}
There are two basic angles associated with a pair of linear subspaces:
the Diximier angle and the Friedrichs angle.
The Dixmier angle of the pair of orthogonal complements is the same as
the Dixmier angle of the original pair provided that the original pair
gives rise to a direct (not necessarily orthogonal) sum of the underlying Hilbert space.
The Friedrichs angles of the original pair
and the pair of the orthogonal complements always coincide.
These two results are due to Krein, Krasnoselskii, and Milman and
to Solmon, respectively. In 1995, Deutsch provided a very nice survey
with complete proofs and interesting historical comments.
One key result in Deutsch's survey was an inequality for Dixmier angles provided by Hundal.

In this paper, we present extensions of these results to the case when the linear subspaces 
are only required to be convex cones. It turns out that Hundal's result
has a nice conical extension while the situation is more technical
for the results by Krein et al.\ and by Solmon. Our analysis is based
on Deutsch's survey and our recent work on angles between convex sets.
Throughout, we also provide examples illustrating the sharpness of our results.

\noindent {\bf Keywords.}
Angle between closed convex cones;
Dixmier angle; dual cone; Friedrichs angle; polar cone.
}

\renewcommand{\thefootnote}{}
\footnotetext{ $^*$Corresponding author.
\par
E-mail addresses: heinz.bauschke@ubc.ca (H.~H.~Bauschke), hui.ouyang@alumni.ubc.ca (H.~Ouyang), shawn.wang@ubc.ca (X.~Wang)
\par
Received May 11, 2021; Accepted  TBA. }

\section{Introduction}

Throughout this paper, we assume that
\begin{empheq}[box = \mybluebox]{equation*}
\text{$\mathcal{H}$ is a real Hilbert space},
\end{empheq}
with inner product $\innp{\cdot,\cdot}$ and induced norm $\|\cdot\|$.

Following Deutsch and Hundal \cite{DH2006II}, we now recall
two notions of angles between two nonempty convex sets:

\begin{definition} {\bf (Dixmier and Friedrichs angle)} {\rm \cite[Definitions~2.3 and 3.2]{DH2006II}} \label{defn:Angles}
	Let $C$ and $D$ be nonempty convex sets in $\mathcal{H}$.
	The \emph{minimal angle} or \emph{Dixmier angle} between $C$ and $D$ is the angle in
	$\left[0, \frac{\pi}{2} \right]$ whose cosine is given by
	\begin{empheq}[box = \mybluebox]{equation}
	\label{eq:minimalangle}
	c_{0} (C,D) := \sup \big\{   \innp{x,y} ~\big| ~ x \in \overline{\cone} (C) \cap \mathbf{B}_{\mathcal{H}}, ~ y \in \overline{\cone} (D) \cap \mathbf{B}_{\mathcal{H}}\big\},
	\end{empheq}
	where the $\cone$ and $\overline{\cone}$ operator returns
	the smallest cone and closed cone containing its argument, respectively, and
	where $\mathbf{B}_{\mathcal{H}}$ denotes the closed unit ball in $\mathcal{H}$.
	In addition, the \emph{angle} or \emph{Friedrichs angle}
	between $C$ and $D$ is the angle in $\left[0, \frac{\pi}{2} \right]$ whose cosine is given by
	\begin{empheq}[box = \mybluebox]{equation}
	c(C,D) := c_{0} \big( (\cone C) \cap \overline{ C^{\ominus} +D^{\ominus}},
	~ (\cone D) \cap  \overline{ C^{\ominus} +D^{\ominus}} \big),
	\end{empheq}
	and where $C^\ominus := \{ y\in\mathcal{H}~|~ \sup\innp{C,y}\leq 0\}$ is the polar cone of $C$
	and $C^\oplus := -C^\ominus$ is the dual cone of $C$.
\end{definition}
The cosine of these angles plays a key role in describing convergence rates
for projection methods such as the standard cyclic projection algorithm.
For a taste of such results, we refer the reader to, e.g.,
\cite{DH2006II}, \cite{DH2008III},
and \cite[Example~9.40]{D2012}.

When $M$ and $N$ are closed linear subspaces of $\mathcal{H}$,
then the cosines of their Dixmier and Friedrichs angles --- which were first studied
in \cite{Dixmier1949} and \cite{Friedrichs1937} --- can be written more succinctly as
\begin{equation*}
c_{0} (M,N) = \sup \big\{   \innp{x,y} ~\big| ~ x \in M \cap \mathbf{B}_{\mathcal{H}}, ~ y \in N \cap \mathbf{B}_{\mathcal{H}}\big\}
\end{equation*}
and
\begin{equation*}
c(M,N) := c_{0} \big( M \cap (M\cap N)^\perp,
~ N \cap  (M\cap N)^\perp \big),
\end{equation*}
respectively.
The following three key results in this linear setting
were beautifully presented and proved in Deutsch's survey \cite{Deutsch1995} to which
we also refer the reader for interesting historical comments:

\begin{fact} {\bf (Hundal's Lemma)} {\rm (see \cite[Lemma~14]{Deutsch1995})} \label{f:Hundal}
	Let $M$ and $N$ be closed linear subspaces of $\mathcal{H}$ such that
	$c_{0} (M,N) <1$, and  let $X$ be a closed linear subspace such that $M+N\subseteq X \subseteq
	\mathcal{H}$. Then
	\begin{equation*}
	c_{0} (M,N) \leq c_{0} (M^{\perp} \cap X, N^{\perp} \cap X ).
	\end{equation*}
\end{fact}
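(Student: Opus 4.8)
The plan is to reformulate the Dixmier angle through orthogonal projections and then to exhibit, for the pair $(M^{\perp}\cap X, N^{\perp}\cap X)$, an explicit near-optimal pair of vectors manufactured from a near-optimal pair for $(M,N)$. Write $P_{M}$ and $P_{N}$ for the orthogonal projectors onto $M$ and $N$, and set $c:=c_{0}(M,N)$. First I would record two elementary facts: for a unit vector $y\in N$ one has $\sup\{\langle x,y\rangle \mid x\in M\cap\mathbf{B}_{\mathcal{H}}\}=\|P_{M}y\|$, so that $c=\sup\{\|P_{M}y\| \mid y\in N,\ \|y\|=1\}$; and symmetrically $\|P_{N}x\|\le c$ for every unit vector $x\in M$. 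The case $c=0$ is trivial because $c_{0}\ge 0$ always, so I assume $0<c<1$.

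For the construction, fix $\varepsilon>0$ and choose a unit vector $y\in N$ with $a:=\|P_{M}y\|>c-\varepsilon$; since $c>0$ we may take $a>0$. Put $x:=\|P_{M}y\|^{-1}P_{M}y\in M$, a unit vector, so that by design $P_{M}y=ax$ and $\langle x,y\rangle=a$. Now define
\begin{equation*}
u:=y-P_{M}y=y-ax\in M^{\perp}\qquad\text{and}\qquad v:=P_{N}x-x\in N^{\perp}.
\end{equation*}
The hypothesis $M+N\subseteq X$ enters exactly here: since $x,y,P_{M}y,P_{N}x$ all lie in $M+N\subseteq X$, both $u$ and $v$ lie in $X$, whence $u\in M^{\perp}\cap X$ and $v\in N^{\perp}\cap X$.

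The remaining step is a direct computation. Using $P_{M}x=x$, $P_{N}y=y$, and $\langle x,P_{N}x\rangle=\|P_{N}x\|^{2}=:q$, one finds $\langle u,v\rangle=a(1-q)$, while $\|u\|^{2}=1-a^{2}$ and $\|v\|^{2}=1-q$. Because $0<c<1$ and $a\le c$, $q\le c^{2}<1$, so $u$ and $v$ are nonzero; normalizing them produces admissible competitors for $c_{0}(M^{\perp}\cap X, N^{\perp}\cap X)$, and hence
\begin{equation*}
c_{0}(M^{\perp}\cap X, N^{\perp}\cap X)\ \ge\ \frac{\langle u,v\rangle}{\|u\|\,\|v\|}\ =\ \frac{a\sqrt{1-q}}{\sqrt{1-a^{2}}}\ \ge\ \frac{a\sqrt{1-c^{2}}}{\sqrt{1-a^{2}}},
\end{equation*}
where the last inequality uses $q\le c^{2}$. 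Letting $\varepsilon\downarrow 0$, so that $a\uparrow c$, the right-hand side tends to $\frac{c\sqrt{1-c^{2}}}{\sqrt{1-c^{2}}}=c$, which gives $c_{0}(M^{\perp}\cap X, N^{\perp}\cap X)\ge c=c_{0}(M,N)$.

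The main obstacle is that in infinite dimensions the supremum defining $c$ need not be attained, and there is no reason for a genuine pair of principal vectors (satisfying $P_{M}y=ax$ \emph{and} $P_{N}x=ay$ simultaneously) to exist. The device that circumvents this is the asymmetric choice $x:=\|P_{M}y\|^{-1}P_{M}y$, which secures the single relation $P_{M}y=ax$ for free; the missing second relation is then replaced by the crude but sufficient one-sided bound $\|P_{N}x\|\le c$, after which the limit $a\uparrow c$ closes the resulting slack. It is also worth pinpointing where the two hypotheses are used: $c<1$ guarantees $u,v\neq 0$ so that the normalization is legitimate, and $M+N\subseteq X$ is precisely what places the constructed vectors inside $X$.
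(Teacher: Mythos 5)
Your proof is correct and takes essentially the same approach as the one the paper relies on and adapts in its conical extension (Theorem 3.3, following Deutsch's proof of Lemma 2.14): up to swapping the roles of $M$ and $N$ and a harmless positive rescaling, your pair $u = y - P_{M}y$ and $v = P_{N}x - x = a^{-1}\left(P_{N}P_{M}y - P_{M}y\right)$ is exactly the projection-residual pair $x_{k}-P_{N}x_{k}$, $P_{M}P_{N}x_{k}-P_{N}x_{k}$ used there, with $M+N\subseteq X$ invoked at the same point and the same limiting argument $a\uparrow c$ finishing the proof. The normalization $x:=\|P_{M}y\|^{-1}P_{M}y$ is a cosmetic simplification of the bookkeeping, not a different method.
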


\begin{fact} {\bf (Krein, Krasnoselskii, and Milman)} {\rm (see \cite[Theorem~15]{Deutsch1995})} \label{f:KKM}
	Let $M$ and $N$ be closed linear subspaces of $\mathcal{H}$ such that
	$M\cap N = \{0\}$ and $M+N=\mathcal{H}$. Then
	\begin{equation*}
	c_{0} (M,N) = c_0(M^\perp,N^\perp).
	\end{equation*}
\end{fact}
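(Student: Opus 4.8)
The plan is to obtain both inequalities $c_{0}(M,N) \le c_{0}(M^\perp, N^\perp)$ and $c_{0}(M^\perp, N^\perp) \le c_{0}(M,N)$ as two symmetric applications of Hundal's Lemma (\cref{f:Hundal}), each with the maximal choice $X = \mathcal{H}$. Since the hypotheses give $M + N = \mathcal{H} \subseteq X = \mathcal{H}$ and $M^\perp \cap \mathcal{H} = M^\perp$, $N^\perp \cap \mathcal{H} = N^\perp$, the lemma will read $c_{0}(M,N) \le c_{0}(M^\perp, N^\perp)$ directly, once its standing assumption $c_{0}(\cdot,\cdot) < 1$ has been verified. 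Thus the whole proof reduces to checking that assumption for each pair and to recognizing that $(M^\perp, N^\perp)$ again satisfies the direct-sum hypotheses of the theorem, so that the two pairs play interchangeable roles.

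First I would verify $c_{0}(M,N) < 1$, which is the one place where the direct-sum hypotheses $M \cap N = \{0\}$ and $M + N = \mathcal{H}$ are genuinely used for the first inequality. Because $\mathcal{H} = M \oplus N$ is a topological direct sum of closed subspaces, the oblique projection $E$ onto $M$ along $N$ is a bounded linear idempotent (closed-graph theorem), with $E|_{M} = \Id$ and $E|_{N} = 0$. If $c_{0}(M,N) = 1$, one finds unit vectors $x_{n} \in M$, $y_{n} \in N$ with $\innp{x_{n}, y_{n}} \to 1$; then $\|x_{n} - y_{n}\|^{2} = 2 - 2\innp{x_{n}, y_{n}} \to 0$, while $E(x_{n} - y_{n}) = x_{n}$, so $\|x_{n}\| = \|E(x_{n} - y_{n})\| \le \|E\|\,\|x_{n} - y_{n}\| \to 0$, contradicting $\|x_{n}\| = 1$. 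Hence $c_{0}(M,N) < 1$, and \cref{f:Hundal} with $X = \mathcal{H}$ yields $c_{0}(M,N) \le c_{0}(M^\perp, N^\perp)$.

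For the reverse inequality I would show that $(M^\perp, N^\perp)$ satisfies the same direct-sum hypotheses and then rerun the previous paragraph with $(M^\perp, N^\perp)$ in place of $(M,N)$, using $(M^\perp)^\perp = M$ and $(N^\perp)^\perp = N$. The identity $M^\perp \cap N^\perp = (M + N)^\perp = \mathcal{H}^\perp = \{0\}$ is immediate. For $M^\perp + N^\perp = \mathcal{H}$ I would pass to the adjoint $E^{*}$ of the projection above: $E^{*}$ is again a bounded idempotent, with $\Range(E^{*}) = (\ker E)^\perp = N^\perp$ and $\ker(E^{*}) = (\Range E)^\perp = M^\perp$, so $\mathcal{H} = \Range(E^{*}) \oplus \ker(E^{*}) = N^\perp \oplus M^\perp$. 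Thus $(M^\perp, N^\perp)$ meets the hypotheses, the argument of the second paragraph gives $c_{0}(M^\perp, N^\perp) < 1$, and \cref{f:Hundal} applied to $(M^\perp, N^\perp)$ with $X = \mathcal{H}$ yields $c_{0}(M^\perp, N^\perp) \le c_{0}(M,N)$. Combining the two inequalities proves the claim.

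The main obstacle I anticipate is the self-duality step, that is, establishing $M^\perp + N^\perp = \mathcal{H}$: density $\overline{M^\perp + N^\perp} = (M \cap N)^\perp = \mathcal{H}$ is trivial, but the passage from density to equality is exactly where the closedness of $M + N$ must be transported to the complements. The adjoint-idempotent argument above is the cleanest route I see; invoking the classical equivalence \enquote{$M + N$ closed $\iff M^\perp + N^\perp$ closed} would also work but is heavier. Everything else is bookkeeping, provided one keeps in mind that all four subspaces involved are closed, so that $(\cdot)^{\perp\perp}$ acts as the identity.
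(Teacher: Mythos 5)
Your proof is correct and follows essentially the same route as the proof the paper relies on: the paper states this result as a fact quoted from Deutsch's survey \cite[Theorem~2.15]{Deutsch1995}, and the proof given there is precisely your scheme of two symmetric applications of Hundal's Lemma (\cref{f:Hundal}) with $X=\mathcal{H}$, using $M^{\perp\perp}=M$, $N^{\perp\perp}=N$ for the reverse inequality. The only cosmetic difference is that you verify the side conditions ($c_{0}(M,N)<1$ and $M^{\perp}+N^{\perp}=\mathcal{H}$) by hand via the bounded oblique projection $E$ and its adjoint $E^{*}$, where the survey invokes the standard equivalences \enquote{$c_{0}(M,N)<1$ iff $M\cap N=\{0\}$ and $M+N$ is closed} and \enquote{$M+N$ closed iff $M^{\perp}+N^{\perp}$ closed}; both ways are sound.
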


\begin{fact} {\bf (Solmon)} {\rm (see \cite[Theorem~16]{Deutsch1995})} \label{f:Solmon}
	Let $M$ and $N$ be closed linear subspaces of $\mathcal{H}$.
	Then
	\begin{equation*}
	c(M,N) = c(M^\perp,N^\perp).
	\end{equation*}
\end{fact}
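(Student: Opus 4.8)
The plan is to reduce both Friedrichs angles to Dixmier angles of their ``essential parts'' and then to invoke Krein--Krasnoselskii--Milman (\cref{f:KKM}) inside a well-chosen subspace. Write $K := M\cap N$ and set $M_{0} := M\cap K^{\perp}$, $N_{0} := N\cap K^{\perp}$, so that $M = K\oplus M_{0}$ and $N = K\oplus N_{0}$ are orthogonal decompositions, $M_{0}\cap N_{0} = \{0\}$, and, by definition, $c(M,N) = c_{0}(M_{0},N_{0})$. Symmetrically, with $L := M^{\perp}\cap N^{\perp}$ put $M_{1} := M^{\perp}\cap L^{\perp}$ and $N_{1} := N^{\perp}\cap L^{\perp}$, so that $c(M^{\perp},N^{\perp}) = c_{0}(M_{1},N_{1})$; here $L^{\perp} = (M^{\perp}\cap N^{\perp})^{\perp} = \overline{M+N}$. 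Thus everything reduces to proving $c_{0}(M_{0},N_{0}) = c_{0}(M_{1},N_{1})$.

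The core of the argument is an identification of $M_{1}$ and $N_{1}$ as orthogonal complements of $M_{0}$ and $N_{0}$ inside $Y := \overline{M_{0}+N_{0}}$. First I would record $M_{0}^{\perp} = (M\cap K^{\perp})^{\perp} = \overline{M^{\perp}+K} = M^{\perp}\oplus K$, the sum being orthogonal and hence closed because $K\subseteq M$ forces $K\perp M^{\perp}$. Next, since $M_{0},N_{0}\subseteq K^{\perp}$ and $K\perp(M_{0}+N_{0})$, the orthogonal decomposition $M+N = K\oplus(M_{0}+N_{0})$ yields $Y = \overline{M+N}\cap K^{\perp}$. A short computation then gives $Y\cap M_{0}^{\perp} = \overline{M+N}\cap K^{\perp}\cap(M^{\perp}\oplus K) = M^{\perp}\cap\overline{M+N} = M_{1}$, where the middle step uses that any $a+b$ with $a\in M^{\perp}$, $b\in K$ lying in $K^{\perp}$ must have $b = 0$, together with $M^{\perp}\subseteq K^{\perp}$; in particular $M_{1}\subseteq Y$. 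By symmetry $Y\cap N_{0}^{\perp} = N_{1}$. Hence, viewing $Y$ as a Hilbert space in its own right, $M_{1}$ and $N_{1}$ are precisely the orthogonal complements of $M_{0}$ and $N_{0}$ within $Y$.

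With this in hand I would split into two cases according to whether $M+N$ is closed, using the classical characterization that for subspaces with trivial intersection the Dixmier cosine equals $1$ exactly when their sum fails to be closed. If $M+N$ is closed, then so is $M_{0}+N_{0}$ (orthogonality again), so $Y = M_{0}\oplus N_{0}$ with $M_{0}\cap N_{0} = \{0\}$, and \cref{f:KKM} applied inside $Y$ gives $c_{0}(M_{0},N_{0}) = c_{0}(Y\cap M_{0}^{\perp},Y\cap N_{0}^{\perp}) = c_{0}(M_{1},N_{1})$, since $c_{0}$ is intrinsic to the pair of subspaces. If $M+N$ is not closed, then $c(M,N) = c_{0}(M_{0},N_{0}) = 1$, and I would argue by contradiction: were $c(M^{\perp},N^{\perp}) < 1$, then $M^{\perp}+N^{\perp}$ would be closed, so the already-established closed case, applied to the pair $(M^{\perp},N^{\perp})$, would give $c(M^{\perp},N^{\perp}) = c(M,N) = 1$, a contradiction; hence $c(M^{\perp},N^{\perp}) = 1 = c(M,N)$. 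I expect the main obstacle to be precisely this non-closed case: the direct sum $M_{0}+N_{0}$ is generally not orthogonal, so \cref{f:KKM} is genuinely needed rather than a one-line computation, and the equality must be transported across the closedness dichotomy via the self-dual contradiction argument above.
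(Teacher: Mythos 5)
Your proof is correct, but it follows a genuinely different route from the paper's. The paper (in \cref{cor:cEQ}) derives Solmon's identity from its new conical result \cref{theorem:cEQ}, which in turn rests on the conical Hundal extension \cref{prop:KMoplus}: the degenerate case is settled by the full equivalence in \cite[Theorem~2.13]{Deutsch1995}, namely $c(M,N)=1$ iff $M+N$ is not closed iff $M^{\perp}+N^{\perp}$ is not closed iff $c(M^{\perp},N^{\perp})=1$, and when both cosines are less than $1$ the set-theoretic hypotheses of \cref{theorem:cEQ} are verified using \cref{lemma:KMperp} and the closedness of orthogonal sums \cite[Proposition~29.6]{BC2017}. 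You instead bypass the conical machinery entirely: your reduction to the relative orthogonal complements $M_{1}=Y\cap M_{0}^{\perp}$ and $N_{1}=Y\cap N_{0}^{\perp}$ inside $Y=\overline{M_{0}+N_{0}}$ is correct (including the key identities $Y=\overline{M+N}\cap K^{\perp}$ and $K^{\perp}\cap(M^{\perp}\oplus K)=M^{\perp}$), \cref{f:KKM} applied inside the Hilbert space $Y$ handles the case where $M+N$ is closed, and your bootstrapping contradiction handles the non-closed case. Two differences are worth noting. First, your argument needs only the one-sided implication ``$c_{0}<1$ implies the sum is closed'' (available in the paper as \cref{theorem:K1K2closed}, since $M_{0}-N_{0}=M_{0}+N_{0}$ for subspaces); the nontrivial duality ``$M+N$ closed iff $M^{\perp}+N^{\perp}$ closed'', which the paper imports wholesale from Deutsch's Theorem~2.13, is actually re-derived by your contradiction step, so your proof is somewhat more self-contained. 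Second, the paper's detour through \cref{theorem:cEQ} is deliberate --- its point is to exhibit Solmon's theorem as a special case of the new conical result --- whereas your proof rests on \cref{f:KKM}, which is precisely the fact that \cref{sec:neg} shows does not survive the passage to cones; both arguments are valid, but they serve different expository purposes.
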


We are now in a position to describe \emph{the aim of this paper:}
{\em We will study the possibility of generalizing \cref{f:Hundal}, \cref{f:KKM}, and \cref{f:Solmon}
	from linear subspaces to convex cones.}
While we initially anticipated nice generalizations, it turned out that only
\cref{f:Hundal} appears to admit a natural and nice generalization.
The situation is more complicated for \cref{f:KKM} and \cref{f:Solmon};
our results and examples show that there are ostensibly no ``nice'' conical variants.
Our analysis relies on Deutsch's exposition \cite{Deutsch1995} as well as
our recent work \cite{BOWJMAA}.

The remainder of the paper is organized as follows.
In \cref{sec:aux}, we collect facts useful in subsequent proofs.
The (positive) results concerning Hundal's Lemma (\cref{f:Hundal}) are
presented in \cref{sec:pos}.
The (somewhat negative) results concerning \cref{f:KKM} and \cref{f:Solmon}
are provided in \cref{sec:neg}.

Finally, the notation we employ is standard and follows
\cite{BC2017} and \cite{D2012}.
For instance, orthogonality of vectors and sets as well as 
orthogonal complements are indicated by 
``$\perp$'' while ``$+$'' denotes the sum (not necessarily orthogonal)
of vectors and sets. If $S\subseteq \mathcal{H}$, then 
the smallest convex cone (resp.\ closed convex cone)
containing $S$ is denoted by $\cone(S)$ (resp.\ $\overline{\cone}(S)$).

\section{Auxiliary result}
\label{sec:aux}

In this section, we simply list --- for the reader's convenience ---
several known results that are used in proving our new results
in \cref{sec:pos} and \cref{sec:neg}.

\begin{fact} {\rm \cite[Theorem~4.5]{D2012}} \label{fact:dualcone}
	Let $C$ be a nonempty subset of  $\mathcal{H}$.  Then the following hold:
	\begin{enumerate}
		\item  \label{fact:dualcone:o} $C^{\ominus}$ is a closed convex cone and $C^{\perp}$ is a closed linear subspace.
		\item   \label{fact:dualcone:eq}$C^{\ominus} =(\overline{C})^{\ominus}=\left( \cone(C) \right)^{\ominus}=\left( \overline{\cone}(C) \right)^{\ominus}$.
		\item \label{fact:dualcone:oo} $ C^{\ominus\ominus} = \overline{\cone}(C) $.
		
		\item  \label{fact:dualcone:closedconvex} If $C$ is a closed convex cone, then $C^{\ominus\ominus} =C$.
		\item  \label{fact:dualcone:linearsubsp} If $C$ is a linear subspace,
		then $C^{\ominus}=C^{\perp}$; if $C$ is additionally closed, then $C=C^{\ominus\ominus}=C^{\perp \perp}$.
	\end{enumerate}
\end{fact}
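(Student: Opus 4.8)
The plan is to treat assertion (iii), the bipolar identity $C^{\ominus\ominus}=\overline{\cone}(C)$, as the central result and to recover everything else from it. Assertions (i) and (ii) are elementary and I would dispatch them first. For (i), I would write the polar cone as an intersection of closed half-spaces through the origin, $C^{\ominus}=\bigcap_{x\in C}\{y\in\HH\mid\innp{x,y}\leq 0\}$; since each set on the right is a closed convex cone, so is their intersection. Replacing the inequality by an equality exhibits $C^{\perp}=\bigcap_{x\in C}\{y\in\HH\mid\innp{x,y}=0\}$ as an intersection of closed hyperplanes, hence a closed linear subspace.

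For (ii) I would exploit the fact that polarity reverses inclusions. From $C\subseteq\cone(C)\subseteq\overline{\cone}(C)$ and $C\subseteq\overline{C}\subseteq\overline{\cone}(C)$ I immediately get $(\overline{\cone}(C))^{\ominus}\subseteq(\cone C)^{\ominus}\subseteq C^{\ominus}$ and $(\overline{\cone}(C))^{\ominus}\subseteq(\overline{C})^{\ominus}\subseteq C^{\ominus}$. For the reverse inclusion $C^{\ominus}\subseteq(\overline{\cone}(C))^{\ominus}$, I would fix $y\in C^{\ominus}$ and observe that $\innp{\lambda x,y}=\lambda\innp{x,y}\leq 0$ for every $x\in C$ and $\lambda\geq 0$, so the defining inequality survives on all of $\cone(C)$; continuity of $\innp{\cdot,y}$ then pushes it to the closure, giving $y\in(\overline{\cone}(C))^{\ominus}$. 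Since all four polars coincide, (ii) follows.

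The heart of the matter --- and the only place I expect genuine work --- is (iii). The inclusion $\overline{\cone}(C)\subseteq C^{\ominus\ominus}$ is soft: $C\subseteq C^{\ominus\ominus}$ holds directly from the definition of the polar, and since $C^{\ominus\ominus}$ is a closed convex cone by (i), it contains $\overline{\cone}(C)$, the smallest closed convex cone containing $C$. The reverse inclusion is where a separation argument is unavoidable, and I would argue by contraposition. Fixing $x\notin\overline{\cone}(C)$, I would let $p$ be the projection of $x$ onto the nonempty closed convex cone $K:=\overline{\cone}(C)$, which exists by the Hilbert-space projection theorem. The obtuse-angle characterization $\innp{x-p,z-p}\leq 0$ for all $z\in K$, specialized to the conical setting (using $0\in K$ and $2p\in K$), yields $\innp{x-p,p}=0$ together with $\innp{x-p,z}\leq 0$ for every $z\in K$; the latter says $x-p\in K^{\ominus}=C^{\ominus}$ by (ii). But then $\innp{x,x-p}=\norm{x-p}^{2}>0$ since $x\neq p$, so $x$ has a strictly positive inner product with a member of $C^{\ominus}$ and therefore $x\notin C^{\ominus\ominus}$. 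This establishes $C^{\ominus\ominus}\subseteq\overline{\cone}(C)$ and hence equality; the main obstacle is simply invoking the projection theorem and its conical variational inequality correctly, as everything else is bookkeeping.

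Finally, (iv) and (v) drop out of (iii). If $C$ is a closed convex cone then $\overline{\cone}(C)=C$, so (iii) gives $C^{\ominus\ominus}=C$. If $C$ is a linear subspace, then $x\in C\iff -x\in C$ forces the inequality $\innp{x,y}\leq 0$ (for all $x\in C$) to be equivalent to $\innp{x,y}=0$ (for all $x\in C$), whence $C^{\ominus}=C^{\perp}$; and when $C$ is additionally closed, applying this subspace identity to the closed subspace $C^{\perp}$ gives $C^{\perp\perp}=(C^{\perp})^{\perp}=(C^{\perp})^{\ominus}=(C^{\ominus})^{\ominus}=C^{\ominus\ominus}=C$, the last equality by (iv).
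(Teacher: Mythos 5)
The paper never proves this statement: it is quoted verbatim as a known Fact from \cite[Theorem~4.5]{D2012}, so there is no internal proof to compare against. Your argument is correct and is essentially the standard one found in that reference: polars as intersections of closed half-spaces for (i), inclusion reversal plus positive homogeneity and continuity for (ii), the projection-theorem (obtuse-angle) argument for the bipolar identity (iii), and (iv)--(v) as formal consequences of (i)--(iii). The one point you should make explicit is that the projection step needs $K=\overline{\cone}(C)$ to be \emph{convex}: here $\overline{\cone}(C)$ must be read as the smallest closed \emph{convex} cone containing $C$ (the closure of the convex conical hull), since for the smallest closed, not necessarily convex, cone the identity (iii) is false in general (take $C=\{(1,0),(0,1)\}$ in $\mathbb{R}^{2}$, whose bipolar is the whole first quadrant, not the union of two rays); you do use this reading consistently --- both in calling $K$ convex and in invoking minimality in the soft inclusion --- so the proof stands as written.
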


\begin{fact} {\rm \cite[Lemma~2.5]{BOWJMAA}} \label{lemma:polardual}
	Let $C$  be a nonempty subset  of $\mathcal{H}$.   Then the following statements hold:
	\begin{enumerate}
		\item \label{lemma:polardual:-C} $	(-C)^{\ominus} =- C^{\ominus} =C^{\oplus}$.
		\item \label{lemma:polardual:oplus} $C^{\oplus\oplus}=\overline{\cone} (C)$.
		
		\item  \label{lemma:polardual:llinear} If $C$ is a linear subspace of $\mathcal{H}$, then $C^{\perp}=C^{\ominus}=C^{\oplus}$.
	\end{enumerate}	
\end{fact}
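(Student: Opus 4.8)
The plan is to derive all three statements directly from the definition $C^{\oplus} = -C^{\ominus}$ of the dual cone together with the already-recorded properties of the polar cone in \cref{fact:dualcone}, so that no genuinely new machinery is needed.

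For \cref{lemma:polardual:-C}, I would simply unwind the definition of the polar cone. Since $\sup\innp{-C,y} = \sup_{c\in C}\innp{c,-y}$, the condition $y\in(-C)^{\ominus}$ is equivalent to $-y\in C^{\ominus}$, that is, $y\in -C^{\ominus}$; this gives $(-C)^{\ominus} = -C^{\ominus}$, and the second equality $-C^{\ominus} = C^{\oplus}$ holds by the very definition of the dual cone.

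For \cref{lemma:polardual:oplus}, the key idea is to reduce the double dual to the double polar, for which \cref{fact:dualcone:oo} already supplies $C^{\ominus\ominus} = \overline{\cone}(C)$. Writing $C^{\oplus} = -C^{\ominus}$ and applying \cref{lemma:polardual:-C} twice, I would compute $C^{\oplus\oplus} = -(-C^{\ominus})^{\ominus} = -\big(-(C^{\ominus})^{\ominus}\big) = C^{\ominus\ominus}$, and then substitute the value from \cref{fact:dualcone:oo} to finish this part. The only thing to watch here is the bookkeeping of the two sign reversals, which is the one place where a careless step could slip.

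For \cref{lemma:polardual:llinear}, I would invoke \cref{fact:dualcone:linearsubsp}, which already yields $C^{\ominus} = C^{\perp}$ when $C$ is a linear subspace. It then remains only to identify $C^{\oplus}$: since $C^{\perp}$ is itself a linear subspace and hence symmetric, $C^{\oplus} = -C^{\ominus} = -C^{\perp} = C^{\perp}$, giving the full chain of equalities. Overall I expect no serious obstacle: every step is either a one-line unpacking of the polar-cone definition or a citation of \cref{fact:dualcone}, with the mild sign accounting in \cref{lemma:polardual:oplus} being the most error-prone point.
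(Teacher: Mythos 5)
Your proof is correct, and all three steps check out: the unwinding of the definition for \cref{lemma:polardual:-C}, the sign bookkeeping giving $C^{\oplus\oplus}=-\bigl(-C^{\ominus}\bigr)^{\ominus}=C^{\ominus\ominus}=\overline{\cone}(C)$ via \cref{fact:dualcone}\cref{fact:dualcone:oo}, and the identification $C^{\oplus}=-C^{\perp}=C^{\perp}$ from \cref{fact:dualcone}\cref{fact:dualcone:linearsubsp}. Note that this paper does not prove the statement at all --- it is imported as a Fact from \cite{BOWJMAA} --- so there is no in-paper argument to compare against; your derivation is the routine one that such a citation presupposes, and it is complete as written.
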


\begin{fact} {\rm \cite[Propositions~6.3 and 6.4]{BC2017} } \label{lemma:K}
	Let $K$ be a nonempty convex cone in $\mathcal{H}$.
	Then  $K+K=K$.
	If $-K \subseteq K$, then $K$ is a   linear subspace.
\end{fact}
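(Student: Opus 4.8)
The plan is to deduce both assertions directly from the two defining properties of a nonempty convex cone: closure under multiplication by positive scalars (the cone property) and convexity.

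For the identity $K+K=K$, I would prove the two inclusions separately. To see $K+K\subseteq K$, take $x,y\in K$; convexity gives $\tfrac12 x+\tfrac12 y\in K$, and since $K$ is a cone, multiplying this point by the positive scalar $2$ yields $x+y\in K$. For the reverse inclusion $K\subseteq K+K$, fix $x\in K$; the cone property gives $\tfrac12 x\in K$, so $x=\tfrac12 x+\tfrac12 x\in K+K$. Combining the two inclusions gives the claim.

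For the second assertion, I would verify that, under the additional hypothesis $-K\subseteq K$, the cone $K$ satisfies the three axioms of a linear subspace. First, since $K\neq\varnothing$, pick $x\in K$; then $-x\in -K\subseteq K$, and convexity gives $0=\tfrac12 x+\tfrac12(-x)\in K$, so $K$ contains the origin. Closure under addition is precisely the first assertion $K+K=K$. Finally, for closure under an arbitrary real scalar $\lambda$: the case $\lambda>0$ is the cone property, the case $\lambda=0$ follows from $0\in K$, and for $\lambda<0$ I would write $\lambda x=\abs{\lambda}(-x)$ and note that $-x\in -K\subseteq K$, whence $\abs{\lambda}(-x)\in K$ again by the cone property. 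Hence $K$ is a linear subspace.

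There is essentially no genuine obstacle here; the result is elementary, and each part reduces to a one-line manipulation. The only point requiring a little care is the convention for cones, namely whether a cone is required to contain $0$ a priori. The argument above sidesteps this by \emph{deriving} $0\in K$ from the hypothesis $-K\subseteq K$ together with convexity, so that the proof of the subspace property does not silently depend on that convention.
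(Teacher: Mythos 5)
Your proof is correct. Note that the paper itself gives no argument for this statement: it is recorded as a \emph{Fact} and attributed to \cite[Propositions~6.3 and 6.4]{BC2017}, so there is no in-paper proof to compare against; your elementary two-inclusion argument for $K+K=K$ and the axiom-by-axiom verification of the subspace property (including deriving $0\in K$ from $-K\subseteq K$ and convexity, which neatly avoids any dependence on whether a cone is required to contain the origin) is exactly the kind of standard argument the cited reference uses.
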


\begin{fact}  {\rm \cite[Propositions~6.28]{BC2017}} \label{fact:chara:PK}
	Let $K$ be a nonempty closed convex cone in $\mathcal{H}$, let $x \in \mathcal{H}$, and let
	$p \in \mathcal{H}$. Then $ p =\Pro_{K} x \Leftrightarrow \left[  p \in K, x-p \perp p,  ~ \text{and}~ x-p \in K^{\ominus}\right]$.
\end{fact}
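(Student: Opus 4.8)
The plan is to reduce this cone-specific characterization to the standard obtuse-angle (variational) characterization of the metric projection onto a nonempty closed convex set, namely that $p = \Pro_{K} x$ holds if and only if $p \in K$ and $\innp{x-p,k-p} \leq 0$ for every $k \in K$. The two displayed conditions --- the orthogonality $x-p \perp p$ together with $x-p \in K^{\ominus}$ --- should then emerge by exploiting that $K$ is not merely convex but a \emph{cone}, so that it is stable under nonnegative scaling.

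For the forward implication, suppose $p = \Pro_{K} x$. Since $K$ is a nonempty closed convex cone, it contains $0$ (as $0 = \lim_{\lambda \downarrow 0} \lambda k$ for any fixed $k \in K$, using closedness) and also contains $2p$. Substituting $k = 0$ and $k = 2p$ into the variational inequality yields $\innp{x-p,p} \geq 0$ and $\innp{x-p,p} \leq 0$, respectively, forcing $\innp{x-p,p} = 0$, which is exactly $x-p \perp p$. Feeding this back, for an arbitrary $k \in K$ we obtain $\innp{x-p,k} = \innp{x-p,k-p} + \innp{x-p,p} \leq 0$, so that $x-p \in K^{\ominus}$.

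The converse is a one-line verification: assuming $p \in K$, $\innp{x-p,p}=0$, and $x-p \in K^{\ominus}$, then for every $k \in K$ we have $\innp{x-p,k-p} = \innp{x-p,k} - \innp{x-p,p} \leq 0 - 0 = 0$, which is precisely the variational inequality, whence $p = \Pro_{K}x$.

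The only genuinely delicate point, and the step I would state most carefully, is the legitimacy of the scaling substitutions in the forward direction, which hinges on $0 \in K$ and on $K$ being closed under multiplication by nonnegative scalars. If one prefers a self-contained argument that avoids quoting the projection theorem for general convex sets, both conditions can be derived directly from the minimality of $\norm{x-p}$: the orthogonality follows by minimizing the scalar map $\lambda \mapsto \norm{x - \lambda p}^2$ over $\lambda \geq 0$ and noting that the minimizer $\lambda = 1$ is interior, while the polar-cone membership follows from a first-order perturbation along the segment $p + t(k-p)$, $t \downarrow 0^{+}$, toward any $k \in K$. I expect no serious obstacle beyond this bookkeeping, since the cone structure cleanly separates the single convex-set inequality into its orthogonal and polar components.
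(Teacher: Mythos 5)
The paper states this result as a \emph{Fact}, citing \cite[Proposition~6.28]{BC2017} without reproducing a proof, so there is no internal argument to compare against; your proof is correct and is essentially the standard one found in that reference. Reducing to the variational inequality $\innp{x-p,k-p}\leq 0$ for all $k\in K$, then testing with $k=0$ and $k=2p$ (both legitimate since a nonempty closed convex cone contains $0$ and is stable under nonnegative scaling) to split off the orthogonality and polar-cone conditions is exactly the classical argument, and your converse verification is likewise complete.
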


\begin{fact} {\rm  \cite[page~48]{SS2016I} } \label{fact:coBS}
	Let $C$ and $D$ be nonempty convex subsets of $\mathcal{H}$
	such that $C \neq \{0\}$ and $D \neq \{0\}$.  Then
	\begin{align*}
	c_{0} (C,D) = \max \Big\{ 0, \sup \big\{   \innp{x,y} ~\big|~ x \in \overline{\cone} (C) \cap \mathbf{S}_{\mathcal{H}}, ~ y \in \overline{\cone} (D) \cap \mathbf{S}_{\mathcal{H}}\big\}   \Big\},
	\end{align*}
	where $\mathbf{S}_{\mathcal{H}}$ denotes the unit sphere in $\mathcal{H}$. 
\end{fact}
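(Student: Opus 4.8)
The plan is to establish the two inequalities separately. Write $K_C := \overline{\cone}(C)$ and $K_D := \overline{\cone}(D)$ for the two closed convex cones appearing in \cref{eq:minimalangle}, and abbreviate the sphere supremum by
$s := \sup \big\{ \innp{x,y} ~\big|~ x \in K_C \cap \mathbf{S}_{\mathcal{H}},~ y \in K_D \cap \mathbf{S}_{\mathcal{H}} \big\}$.
Since $C \neq \{0\}$ and $D \neq \{0\}$, each of $C$ and $D$ contains a nonzero vector; normalizing it and using that $K_C, K_D$ are cones shows that $K_C \cap \mathbf{S}_{\mathcal{H}}$ and $K_D \cap \mathbf{S}_{\mathcal{H}}$ are nonempty, so $s$ is a genuine supremum over a nonempty index set.

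For the inequality $c_0(C,D) \geq \max\{0,s\}$, I would observe two things. First, the unit sphere is contained in the unit ball, so the defining supremum in \cref{eq:minimalangle} dominates the one defining $s$, giving $c_0(C,D) \geq s$. Second, $0 \in K_C \cap \mathbf{B}_{\mathcal{H}}$, and $\innp{0,y} = 0$ for any admissible $y$, so the defining supremum is also $\geq 0$. Combining these yields $c_0(C,D) \geq \max\{0,s\}$.

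The reverse inequality $c_0(C,D) \leq \max\{0,s\}$ is the substantive direction. Here I would fix arbitrary $x \in K_C \cap \mathbf{B}_{\mathcal{H}}$ and $y \in K_D \cap \mathbf{B}_{\mathcal{H}}$ and bound $\innp{x,y}$ by $\max\{0,s\}$. If $x = 0$ or $y = 0$, then $\innp{x,y} = 0 \leq \max\{0,s\}$ and we are done. Otherwise $0 < \norm{x} \leq 1$ and $0 < \norm{y} \leq 1$, so I normalize by setting $\hat x := x/\norm{x}$ and $\hat y := y/\norm{y}$; because $K_C$ and $K_D$ are cones, these lie in $K_C \cap \mathbf{S}_{\mathcal{H}}$ and $K_D \cap \mathbf{S}_{\mathcal{H}}$ respectively, whence $\innp{\hat x,\hat y} \leq s$. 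Writing $\innp{x,y} = \norm{x}\,\norm{y}\,\innp{\hat x,\hat y}$ with $\norm{x}\,\norm{y} \in (0,1]$, I split on the sign of $\innp{\hat x,\hat y}$: if it is nonpositive then $\innp{x,y} \leq 0 \leq \max\{0,s\}$, while if it is positive then the factor $\norm{x}\,\norm{y} \leq 1$ only decreases it, so $\innp{x,y} \leq \innp{\hat x,\hat y} \leq s \leq \max\{0,s\}$. Taking the supremum over all admissible $x$ and $y$ gives the claim, and the two inequalities together prove the identity.

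The only delicate point --- and precisely the reason the outer $\max\{0,\cdot\}$ is needed --- is the sign split in the last paragraph: passing from the ball to the sphere via normalization multiplies the inner product by $\norm{x}\,\norm{y} \leq 1$, which pulls a \emph{negative} value toward $0$ rather than away from it, so one cannot simply assert $\innp{x,y} \leq s$. The escape hatch is that whenever the cones are ``obtuse'' enough to force $s < 0$, the ball supremum still bottoms out at $0$ via the choice $x = 0$. Everything else is just positive homogeneity of the inner product together with the fact that $K_C$ and $K_D$ are cones, so I expect no serious obstacle beyond organizing this case analysis cleanly.
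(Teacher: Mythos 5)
The paper does not prove this statement at all --- it is quoted as \cref{fact:coBS} directly from Seeger and Sossa \cite[page~48]{SS2016I}, so there is no internal argument to compare yours against. Judged on its own merits, your proof is correct and complete. The easy direction is handled properly: the sphere suprema are dominated by the ball suprema, and the ball supremum is at least $0$ because $0 \in \overline{\cone}(C) \cap \mathbf{B}_{\mathcal{H}}$ (this membership is legitimate: for any nonzero $c \in C$ one has $\lambda c \in \cone(C)$ for all $\lambda > 0$, and letting $\lambda \downarrow 0$ shows $0$ lies in the closure, so the smallest \emph{closed} cone containing $C$ always contains the origin). The substantive direction is also handled correctly, and you identify the genuinely delicate point: after normalizing $x, y$ from the ball to the sphere, the identity $\innp{x,y} = \norm{x}\norm{y}\innp{\hat x, \hat y}$ only gives $\innp{x,y} \leq \innp{\hat x,\hat y} \leq s$ when $\innp{\hat x,\hat y} \geq 0$, since multiplying a negative number by $\norm{x}\norm{y} \in \left]0,1\right]$ increases it; your sign split, with the degenerate cases $x=0$ or $y=0$ absorbed into the $\max\{0,\cdot\}$, closes this cleanly. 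Your preliminary check that $C \neq \{0\}$ and $D \neq \{0\}$ make the sphere intersections nonempty (so that $s$ is not a supremum over the empty set) is also the right use of that hypothesis.
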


\begin{fact} {\rm \cite[Lemma~2.4, Theorem~2.5 and Proposition~3.3]{DH2006II}}
	\label{fact:AnglesProperties}
	Let $C$ and $D$ be   nonempty convex subsets of $\mathcal{H}$.
	Then the following hold:
	\begin{enumerate}
		\item \label{fact:AnglesProperties:between0and1}$ c_{0} (C,D)  \in \left[0,1\right]$ and $ c (C,D)  \in \left[0,1\right]$.
		\item \label{fact:AnglesProperties:ineq} $(\forall x \in \overline{\cone} (C) )$ $(\forall y \in \overline{\cone} (D) )$ $\innp{x,y} \leq   c_{0} (C,D) \norm{x} \norm{y}$.
		\item  \label{fact:AnglesProperties:c0:EQ} $ c_{0} (C,D)=c_{0} (D, C)=c_{0} \left( \overline{ C} ,\overline{D} \right)=c_{0} \left(  \cone  (C) , \cone (D) \right)=c_{0} \left( \overline{\cone} (C) ,\overline{\cone} (D) \right)$.
	\end{enumerate}
\end{fact}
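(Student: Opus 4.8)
The plan is to treat the three items in turn, unified by the single structural observation that $c_0$ sees its arguments only through their closed conical hulls $\overline{\cone}(C)$ and $\overline{\cone}(D)$. For \cref{fact:AnglesProperties:between0and1}, the upper bound $c_0(C,D) \leq 1$ is immediate from Cauchy--Schwarz: every admissible pair satisfies $\innp{x,y} \leq \norm{x}\norm{y} \leq 1$ since $x,y \in \mathbf{B}_{\mathcal{H}}$. For the lower bound I would note that $0 \in \overline{\cone}(C) \cap \mathbf{B}_{\mathcal{H}}$ and $0 \in \overline{\cone}(D) \cap \mathbf{B}_{\mathcal{H}}$ (the origin lies in any closed conical hull of a nonempty set, as a limit $\lambda x \to 0$ with $\lambda \to 0^+$), so the pair $(0,0)$ is feasible and yields $\innp{0,0} = 0$; hence the supremum defining $c_0(C,D)$ is at least $0$. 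The bounds for $c(C,D)$ then follow at once, since $c$ is by definition a particular value of $c_0$.

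For \cref{fact:AnglesProperties:ineq} I would split on whether the vectors vanish. If $x = 0$ or $y = 0$, both sides are $0$ and there is nothing to prove. Otherwise, since $\overline{\cone}(C)$ is a cone, the normalized vector $x/\norm{x}$ lies in $\overline{\cone}(C) \cap \mathbf{B}_{\mathcal{H}}$, and likewise $y/\norm{y} \in \overline{\cone}(D) \cap \mathbf{B}_{\mathcal{H}}$. The definition of $c_0$ as a supremum then gives $\innp{x/\norm{x},\, y/\norm{y}} \leq c_0(C,D)$, and multiplying through by $\norm{x}\norm{y}$ yields the claim.

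For \cref{fact:AnglesProperties:c0:EQ}, the symmetry $c_0(C,D) = c_0(D,C)$ is immediate from the symmetry of the inner product in the defining supremum. The remaining four equalities all reduce to the assertion that each of the operations $C \mapsto \overline{C}$, $C \mapsto \cone(C)$, and $C \mapsto \overline{\cone}(C)$ leaves the closed conical hull unchanged, namely $\overline{\cone}(\overline{C}) = \overline{\cone}(\cone(C)) = \overline{\cone}(\overline{\cone}(C)) = \overline{\cone}(C)$. Because $c_0$ is defined purely in terms of $\overline{\cone}$ of each argument, these identities force all five listed expressions to coincide.

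The main obstacle --- indeed essentially the only point requiring care --- is verifying those three absorption/idempotence identities for $\overline{\cone}$. These rest on the facts that $\cone$ is idempotent, that the topological closure absorbs an inner closure (since $\cone(C) \subseteq \cone(\overline{C}) \subseteq \overline{\cone}(C)$ forces equality after taking closures), and that $\overline{\cone}(C)$ is already a closed cone so that reapplying $\overline{\cone}$ is the identity. None of these is deep, but they are precisely the steps where one must be careful about the order in which the closure and conical-hull operations are applied.
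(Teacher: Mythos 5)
This statement is a \emph{Fact} quoted from \cite{DH2006II}: the paper supplies no proof of it, so there is no internal argument to compare yours against. Your blind verification from \cref{defn:Angles} is correct and is the natural one: Cauchy--Schwarz together with feasibility of the pair $(0,0)$ gives \cref{fact:AnglesProperties}\cref{fact:AnglesProperties:between0and1}, normalization of nonzero vectors gives \cref{fact:AnglesProperties}\cref{fact:AnglesProperties:ineq}, and the identities $\overline{\cone}(\overline{C}) = \overline{\cone}(\cone(C)) = \overline{\cone}(\overline{\cone}(C)) = \overline{\cone}(C)$, which you justify correctly, give \cref{fact:AnglesProperties}\cref{fact:AnglesProperties:c0:EQ}, since $c_0$ depends on its arguments only through their closed conical hulls. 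One point you should make explicit in the step where the bounds for $c(C,D)$ ``follow at once'': applying your item-(i) argument to $c(C,D)=c_0\bigl((\cone C)\cap\overline{C^\ominus+D^\ominus},\,(\cone D)\cap\overline{C^\ominus+D^\ominus}\bigr)$ requires these two intersections to be nonempty, which holds because each contains the origin under the convention of \cite{DH2006II} that conical hulls contain $0$; under a strictly-positive-multiples convention they can be empty (e.g.\ $C=D=\{x_0\}$ with $x_0\neq 0$), so the lower bound $c(C,D)\geq 0$ genuinely uses that convention rather than following formally from item (i).
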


\begin{fact} {\rm \cite[Lemma~2.11]{BOWJMAA}}\label{lemma:CD}
	Let $C$ and $D$ be   nonempty convex subsets of $\mathcal{H}$. Then the following hold:
	\begin{enumerate}
		\item \label{lemma:CD:UV} If $U$ and $V$ are nonempty convex subsets of
		$\mathcal{H}$ such that $C \subseteq U$ and $D \subseteq V$, then $c_{0}(C,D) \leq c_{0}(U,V)$.
		\item \label{lemma:CD:--}  $c_{0}(C,D) = c_{0}(-C,-D)$, $c_{0}(-C,D) = c_{0}(C,-D)$, $c (C,D) = c (-C,-D)$, and  $c (-C,D) = c (C,-D)$.
		
		\item  \label{lemma:CD:C0=1}
		If $ (\overline{\cone} (C) \cap \overline{\cone} (D) ) \smallsetminus \{0\} \neq \varnothing$, then $c_{0} (C,D)  =1$.
		
		\item \label{lemma:CD:c0:c} $0 \leq c(C,D) \leq c_{0} (C,D) \leq 1$.
	\end{enumerate}
\end{fact}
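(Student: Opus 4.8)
The plan is to dispatch the four items in turn, each time reducing to the defining supremum for $c_0$ and to the elementary behaviour of the operators $\cone$, $\overline{\cone}$, and $(\cdot)^\ominus$ under inclusion and under negation. For item~\ref{lemma:CD:UV}, I would note that $C \subseteq U$ and $D \subseteq V$ force $\overline{\cone}(C) \subseteq \overline{\cone}(U)$ and $\overline{\cone}(D) \subseteq \overline{\cone}(V)$ by monotonicity of $\overline{\cone}$; hence the pairs $(x,y)$ admissible in the supremum defining $c_0(C,D)$ form a subset of those admissible for $c_0(U,V)$, and a supremum taken over a larger set cannot decrease.

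For item~\ref{lemma:CD:--}, the workhorse identity is $\overline{\cone}(-C) = -\overline{\cone}(C)$, negation being a homeomorphism that commutes with $\cone$ and with closure. Substituting $x \mapsto -x$ and $y \mapsto -y$ in the supremum and using $\innp{-x,-y} = \innp{x,y}$ together with the symmetry of $\mathbf{B}_{\mathcal{H}}$ gives $c_0(C,D) = c_0(-C,-D)$, and $c_0(-C,D) = c_0(C,-D)$ then follows by applying this with $C$ replaced by $-C$. The two Friedrichs-angle equalities require a little more care: using Fact~\ref{lemma:polardual}\ref{lemma:polardual:-C} to write $(-C)^\ominus = -C^\ominus$, I would deduce $\overline{(-C)^\ominus + (-D)^\ominus} = -\overline{C^\ominus + D^\ominus}$, and combining this with $\cone(-C) = -\cone(C)$ shows that each of the two sets defining $c(-C,-D)$ is precisely the negative of the corresponding set defining $c(C,D)$. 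The $c_0$ identity just established then yields $c(C,D) = c(-C,-D)$, and the last equality comes from the same substitution.

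For item~\ref{lemma:CD:C0=1}, I would choose $z \in (\overline{\cone}(C) \cap \overline{\cone}(D)) \smallsetminus \{0\}$ and set $u := z/\norm{z}$; since both sets are cones, $u$ lies in $\overline{\cone}(C) \cap \overline{\cone}(D) \cap \mathbf{B}_{\mathcal{H}}$, so the admissible pair $(u,u)$ contributes $\innp{u,u} = 1$ to the supremum, giving $c_0(C,D) \geq 1$, while the reverse inequality is Fact~\ref{fact:AnglesProperties}\ref{fact:AnglesProperties:between0and1}. Finally, for item~\ref{lemma:CD:c0:c} the outer bounds are immediate from Fact~\ref{fact:AnglesProperties}\ref{fact:AnglesProperties:between0and1}, and the middle inequality follows by setting $A := (\cone C) \cap \overline{C^\ominus + D^\ominus} \subseteq \cone C$ and $B := (\cone D) \cap \overline{C^\ominus + D^\ominus} \subseteq \cone D$, applying item~\ref{lemma:CD:UV} to obtain $c(C,D) = c_0(A,B) \leq c_0(\cone C, \cone D)$, and invoking $c_0(\cone C, \cone D) = c_0(C,D)$ from Fact~\ref{fact:AnglesProperties}\ref{fact:AnglesProperties:c0:EQ}.

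The only step with any real friction is the Friedrichs-angle part of item~\ref{lemma:CD:--}: one must verify that negation commutes cleanly with the composite map $K \mapsto (\cone K) \cap \overline{C^\ominus + D^\ominus}$, tracking the sign simultaneously through the polar cones, the Minkowski sum, the closure, and the intersection. Everywhere else the arguments are direct consequences of monotonicity, the symmetry of the unit ball, and the cited invariance properties of $c_0$.
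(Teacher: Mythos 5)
Your proof is correct: all four arguments (monotonicity of the supremum for item (i), the negation symmetry $\overline{\cone}(-C)=-\overline{\cone}(C)$ combined with $(-C)^\ominus=-C^\ominus$ for item (ii), normalization of a common nonzero element for item (iii), and the reduction of the middle inequality in item (iv) to item (i) via $(\cone C)\cap\overline{C^\ominus+D^\ominus}\subseteq\cone C$) are sound and complete. Note that the paper itself offers no proof to compare against --- this statement is imported as a Fact from \cite[Lemma~2.11]{BOWJMAA} --- but your argument is the natural one and matches what that reference does.
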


\begin{fact} {\rm \cite[Theorem~4.6]{D2012}} \label{fact:K1:Km:dualsum}
	Let $K_{1}$ and $K_2$ be nonempty closed convex cones in $\mathcal{H}$.
	Then
	\begin{equation*}
	(K_1\cap K_2)^\ominus = \overline{K^{\ominus}_{1} + K_2^\ominus}.
	\end{equation*}
\end{fact}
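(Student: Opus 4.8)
The plan is to reduce the claim to the bipolar identity for closed convex cones. First note that both sides are closed convex cones: $K_1\cap K_2$ is an intersection of closed convex cones, and, since the sum of two convex cones is again a convex cone, $L:=\overline{K_1^\ominus+K_2^\ominus}$ is the closure of a convex cone and hence a closed convex cone (its summands being closed convex cones by \cref{fact:dualcone}\,\ref{fact:dualcone:o}). Because a closed convex cone $C$ satisfies $C^{\ominus\ominus}=C$ (\cref{fact:dualcone}\,\ref{fact:dualcone:closedconvex}), two closed convex cones coincide as soon as their polars coincide. Thus it suffices to prove
\[
L^\ominus = (K_1\cap K_2)^{\ominus\ominus} = K_1\cap K_2 ,
\]
the second equality again being \cref{fact:dualcone}\,\ref{fact:dualcone:closedconvex}.

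To compute $L^\ominus$, I would first discard the closure and the cone structure: by \cref{fact:dualcone}\,\ref{fact:dualcone:eq} the polar operator is insensitive to taking closures and generated cones, so $L^\ominus=(K_1^\ominus+K_2^\ominus)^\ominus$. It then remains to verify the elementary identity $(A+B)^\ominus=A^\ominus\cap B^\ominus$ for convex cones $A,B$ that contain the origin. The inclusion $\subseteq$ holds because $0\in B$ gives $A\subseteq A+B$ (whence $(A+B)^\ominus\subseteq A^\ominus$) and symmetrically $(A+B)^\ominus\subseteq B^\ominus$; the reverse inclusion is immediate from $\innp{a+b,y}=\innp{a,y}+\innp{b,y}\le 0$ whenever $y\in A^\ominus\cap B^\ominus$. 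Applying this with $A=K_1^\ominus$ and $B=K_2^\ominus$ (each a closed convex cone containing $0$) yields $L^\ominus=K_1^{\ominus\ominus}\cap K_2^{\ominus\ominus}$, and a final application of $C^{\ominus\ominus}=C$ to $K_1,K_2$ gives $L^\ominus=K_1\cap K_2$, as required.

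The main subtlety is the closure bar on the right-hand side, and correspondingly the one step that uses more than inner-product algebra is the bipolar identity $C^{\ominus\ominus}=C$, whose proof rests on a Hahn--Banach separation argument; this is exactly what forces the closure, since in general $K_1^\ominus+K_2^\ominus$ need not be closed, so the raw sum is only dense in $(K_1\cap K_2)^\ominus$ and the equality can fail without the bar. An alternative, more hands-on route proves the two inclusions directly --- the inclusion $K_1^\ominus+K_2^\ominus\subseteq(K_1\cap K_2)^\ominus$ is immediate and closes up because the right-hand side is closed, while $(K_1\cap K_2)^\ominus\subseteq L$ follows by separating a point $y\notin L$ from the closed convex cone $L$ to produce an $x\in L^\ominus=K_1\cap K_2$ with $\innp{x,y}>0$ --- but this still relies on separation and is the same computation unfolded.
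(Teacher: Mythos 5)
Your proof is correct. Note that the paper itself does not prove this statement: it is quoted as \cref{fact:K1:Km:dualsum} from Deutsch's book \cite[Theorem~4.6]{D2012}, so there is no in-paper argument to compare against. Your derivation is the standard one and it stands entirely on facts the paper does record: \cref{fact:dualcone}\cref{fact:dualcone:o} (polars are closed convex cones), \cref{fact:dualcone}\cref{fact:dualcone:eq} (the polar ignores closures), and the bipolar identity \cref{fact:dualcone}\cref{fact:dualcone:closedconvex}. The reduction is clean: since $(K_1\cap K_2)^\ominus$ and $L:=\overline{K_1^\ominus+K_2^\ominus}$ are both closed convex cones, equality of their polars forces equality of the cones; your computation $L^\ominus=(K_1^\ominus+K_2^\ominus)^\ominus=K_1^{\ominus\ominus}\cap K_2^{\ominus\ominus}=K_1\cap K_2$ is valid, with the elementary identity $(A+B)^\ominus=A^\ominus\cap B^\ominus$ correctly requiring only that $0\in A$ and $0\in B$, which holds since polar cones contain the origin. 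Your closing remark is also on point: the closure bar is unavoidable precisely because $K_1^\ominus+K_2^\ominus$ need not be closed, and the only nonelementary ingredient is the bipolar theorem (separation), exactly as in the textbook treatment.
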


\begin{fact} {\rm \cite[Propositions~3.3(4)]{DH2006II}} \label{fact:coneK1K2}
	Let  $K_{1}$ and $K_{2}$ be  nonempty  closed convex cones in $\mathcal{H}$.
	Then
	\begin{equation*}
	c(K_{1},K_{2}) = c_{0} \left(  K_{1}  \cap (K_{1}\cap K_{2})^{\ominus}, K_{2} \cap (K_{1}\cap K_{2})^{\ominus} \right).
	\end{equation*}
\end{fact}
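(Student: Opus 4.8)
The plan is to unfold the definition of the Friedrichs angle from \cref{defn:Angles} applied to the pair $(C,D) = (K_1, K_2)$, and then to replace the two conical hulls and the closure of the dual sum by simpler expressions using the hypotheses together with \cref{fact:K1:Km:dualsum}. By \cref{defn:Angles} we have
\begin{equation*}
c(K_1, K_2) = c_{0}\big((\cone K_1) \cap \overline{K_1^\ominus + K_2^\ominus}, ~ (\cone K_2) \cap \overline{K_1^\ominus + K_2^\ominus}\big),
\end{equation*}
so it remains only to simplify the two convex sets appearing on the right-hand side.

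First I would observe that, since each $K_i$ is by assumption a nonempty convex cone, it is in particular a cone containing itself; hence the smallest cone containing $K_i$ is $K_i$ itself, that is, $\cone K_1 = K_1$ and $\cone K_2 = K_2$. Next I would invoke \cref{fact:K1:Km:dualsum}, which provides exactly the identity $(K_1 \cap K_2)^\ominus = \overline{K_1^\ominus + K_2^\ominus}$ for closed convex cones $K_1$ and $K_2$. Substituting these two identities into the displayed expression yields
\begin{equation*}
c(K_1, K_2) = c_{0}\big(K_1 \cap (K_1 \cap K_2)^\ominus, ~ K_2 \cap (K_1 \cap K_2)^\ominus\big),
\end{equation*}
which is precisely the claimed equality.

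There is no substantive obstacle here: the statement is essentially a rewriting of the definition of the Friedrichs angle specialized to the case of closed convex cones, and both ingredients (that the conical hull of a convex cone is the cone itself, and the polar-of-intersection formula of \cref{fact:K1:Km:dualsum}) are already at our disposal. The only point meriting a moment's care is the verification that $\cone K_i = K_i$, which follows immediately from $K_i$ being a cone, so that the $\cone$ operator acts trivially on it.
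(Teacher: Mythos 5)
Your proof is correct. Note, however, that the paper itself offers no proof of this statement: it is imported verbatim as a Fact from \cite[Proposition~3.3(4)]{DH2006II}, so there is no internal argument to compare against. Your derivation --- unfolding \cref{defn:Angles} with $C=K_1$ and $D=K_2$, observing that $\cone K_i = K_i$ because each $K_i$ is already a cone (so the smallest cone containing it is itself), and replacing $\overline{K_1^\ominus + K_2^\ominus}$ by $(K_1\cap K_2)^\ominus$ via \cref{fact:K1:Km:dualsum} --- is complete and valid; the hypotheses of \cref{fact:K1:Km:dualsum} (nonempty, closed, convex cones) are exactly the hypotheses of the statement, so the substitution is legitimate. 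In effect you have made this point of the paper self-contained, exhibiting the fact as nothing more than the definition of the Friedrichs angle specialized to closed convex cones combined with the polar-of-intersection formula.
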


\begin{fact} {\rm \cite[Lemma~4.1]{BOWJMAA}}\label{lemma:cK1K2}
	Let $K_{1}$ and $K_{2}$ be nonempty closed convex cones in $\mathcal{H}$. Then the following hold:
	\begin{enumerate}
		\item \label{lemma:cK1K2:1} If $K_{1} \cap K_{2} \neq \{0\}$, then $c_{0}(K_{1},K_{2}) =1$.
		\item \label{lemma:cK1K2:0EQ} If $K_{1} \cap K_{2} =\{0\}$,  then $c_{0}(K_{1},K_{2}) =c(K_{1},K_{2}) $.

		\item \label{lemma:cK1K2:H}  $K_{1} \cap K_{2} =\{0\}$ if and only if $ \overline{K_{1}^{\ominus}+K_{2}^{\ominus}} =\mathcal{H}$.
		
	\end{enumerate}
\end{fact}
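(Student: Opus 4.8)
The plan is to establish the three assertions in turn, reducing each to the duality facts already assembled; the essential preliminary observation is that, since $K_1$ and $K_2$ are closed convex cones, the operators $\cone$ and $\overline{\cone}$ fix them, so in the definitions of $c_0$ and $c$ they may be ignored.

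Assertion (i) I would prove directly: given $z \in (K_1 \cap K_2) \smallsetminus \{0\}$, the normalized vector $z/\norm{z}$ lies in both $K_1 \cap \mathbf{B}_{\mathcal{H}}$ and $K_2 \cap \mathbf{B}_{\mathcal{H}}$, so the supremum defining $c_0(K_1, K_2)$ is at least $\innp{z/\norm{z}, z/\norm{z}} = 1$; since $c_0(K_1,K_2) \le 1$ by \cref{fact:AnglesProperties}, equality follows. (This is also immediate from \cref{lemma:CD}.)

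The core of the argument is assertion (iii), for which the bridge is \cref{fact:K1:Km:dualsum}, giving $(K_1 \cap K_2)^\ominus = \overline{K_1^\ominus + K_2^\ominus}$. If $K_1 \cap K_2 = \{0\}$, then the left side is $\{0\}^\ominus = \mathcal{H}$, whence $\overline{K_1^\ominus + K_2^\ominus} = \mathcal{H}$. Conversely, if $\overline{K_1^\ominus + K_2^\ominus} = \mathcal{H}$, then $(K_1 \cap K_2)^\ominus = \mathcal{H}$; applying the polar operator once more, using $\mathcal{H}^\ominus = \{0\}$ together with the bipolar identity $(K_1 \cap K_2)^{\ominus\ominus} = K_1 \cap K_2$ from \cref{fact:dualcone} (legitimate because $K_1 \cap K_2$ is a closed convex cone), forces $K_1 \cap K_2 = \{0\}$. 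With (iii) in hand, assertion (ii) is quick: when $K_1 \cap K_2 = \{0\}$ we have $\overline{K_1^\ominus + K_2^\ominus} = \mathcal{H}$, so intersecting the cones with this set changes nothing, and the definition of the Friedrichs angle collapses to $c(K_1, K_2) = c_0(K_1, K_2)$.

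The one step requiring genuine care is the reverse implication in (iii): it relies on the bipolar theorem for $K_1 \cap K_2$, so one must confirm that this intersection really is a closed convex cone before inverting the polar. Once that is secured, the remainder is routine manipulation of the polar-cone identities.
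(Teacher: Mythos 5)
Your proof is correct. Note that the paper itself offers no proof of this statement --- it is quoted as a Fact from \cite[Lemma~4.1]{BOWJMAA} --- so there is no internal argument to compare against; what you have done is reconstruct the result from the paper's own toolkit, and every step checks out: (i) is immediate from normalizing a common nonzero vector (equivalently, from \cref{lemma:CD}\cref{lemma:CD:C0=1}); (iii) follows from the identity $(K_{1}\cap K_{2})^{\ominus}=\overline{K_{1}^{\ominus}+K_{2}^{\ominus}}$ of \cref{fact:K1:Km:dualsum} together with the bipolar theorem \cref{fact:dualcone}\cref{fact:dualcone:closedconvex}, whose applicability you rightly justify by observing that $K_{1}\cap K_{2}$ is a nonempty closed convex cone; and (ii) then collapses the Friedrichs-angle definition since $\overline{K_{1}^{\ominus}+K_{2}^{\ominus}}=\mathcal{H}$ makes the intersections in \cref{defn:Angles} vacuous. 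This is the natural route and almost certainly the one taken in the cited source.
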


\begin{fact} {\rm \cite[Proposition~4.4]{BOWJMAA}}\label{prop:cK1K2}
	Let $\mathcal{K}$ be a finite-dimensional linear subspace of $\mathcal{H}$, and
	let $K_{1}$ and $K_{2}$ be nonempty closed convex cones in $\mathcal{H}$ such that
	$K_{1} \subseteq  \mathcal{K}$ or $K_{2} \subseteq  \mathcal{K}$.
	Then the following hold:
	\begin{enumerate}
		\item \label{prop:cK1K2:c01} $K_{1} \cap K_{2} \neq \{0\}$ if and only if  $c_{0}(K_{1},K_{2}) =1$.
		\item \label{prop:cK1K2:EQ}   $K_{1} \cap K_{2} =\{0\}$ if and only if  $c_{0}(K_{1},K_{2}) =c(K_{1},K_{2}) $.
		\item \label{prop:cK1K2:c01:EQ} $K_{1} \cap K_{2} = \{0\}$ if and only if  $c_{0}(K_{1},K_{2}) <1$.
	\end{enumerate}
\end{fact}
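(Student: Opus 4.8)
The plan is to first disentangle the logical structure of the three equivalences and then isolate the one place where the finite-dimensional hypothesis does real work. Since $c_0(K_1,K_2)\in[0,1]$ by \cref{fact:AnglesProperties}\ref{fact:AnglesProperties:between0and1}, the statement $c_0(K_1,K_2)<1$ is merely the negation of $c_0(K_1,K_2)=1$; consequently \ref{prop:cK1K2:c01:EQ} is the contrapositive of \ref{prop:cK1K2:c01} and needs no separate proof. The forward implications are already on record: $K_1\cap K_2\neq\{0\}\Rightarrow c_0(K_1,K_2)=1$ is \cref{lemma:cK1K2}\ref{lemma:cK1K2:1}, and $K_1\cap K_2=\{0\}\Rightarrow c_0(K_1,K_2)=c(K_1,K_2)$ is \cref{lemma:cK1K2}\ref{lemma:cK1K2:0EQ}. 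Hence the genuine content reduces to two reverse implications: the one in \ref{prop:cK1K2:c01}, and the one in \ref{prop:cK1K2:EQ}.

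I would establish the reverse implication of \ref{prop:cK1K2:c01} by a compactness argument, which is the crux of the whole proposition. Using the symmetry $c_0(K_1,K_2)=c_0(K_2,K_1)$ from \cref{fact:AnglesProperties}\ref{fact:AnglesProperties:c0:EQ}, I may assume $K_1\subseteq\mathcal{K}$. Suppose $c_0(K_1,K_2)=1$; then both cones are nontrivial (otherwise the cosine would be $0$), so \cref{fact:coBS} furnishes sequences $(x_n)$ in $K_1\cap\mathbf{S}_{\mathcal{H}}$ and $(y_n)$ in $K_2\cap\mathbf{S}_{\mathcal{H}}$ with $\innp{x_n,y_n}\to 1$. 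The set $K_1\cap\mathbf{S}_{\mathcal{H}}$ is closed and bounded in the finite-dimensional space $\mathcal{K}$, hence compact, so along a subsequence $x_n\to x$ with $x\in K_1\cap\mathbf{S}_{\mathcal{H}}$. The identity $\norm{x_n-y_n}^2=2-2\innp{x_n,y_n}\to 0$ then forces $y_n\to x$ as well, and closedness of $K_2$ yields $x\in K_2$. Thus $x\in(K_1\cap K_2)\smallsetminus\{0\}$, so $K_1\cap K_2\neq\{0\}$, as desired.

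For the reverse implication of \ref{prop:cK1K2:EQ} I would argue the contrapositive. Assume $K_1\cap K_2\neq\{0\}$; by \ref{prop:cK1K2:c01} we then have $c_0(K_1,K_2)=1$, so it remains to show $c(K_1,K_2)<1$. Write $L:=K_1\cap K_2$, a nonempty closed convex cone, and set $A:=K_1\cap L^{\ominus}$ and $B:=K_2\cap L^{\ominus}$, which are again nonempty closed convex cones. Here $A\cap B=L\cap L^{\ominus}=\{0\}$, since any $x\in L\cap L^{\ominus}$ satisfies $\norm{x}^2=\innp{x,x}\leq 0$ by definition of the polar cone and hence vanishes. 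Because $A\subseteq K_1\subseteq\mathcal{K}$, the already-proved part \ref{prop:cK1K2:c01:EQ}, applied to the pair $(A,B)$, gives $c_0(A,B)<1$. Finally \cref{fact:coneK1K2} identifies $c(K_1,K_2)=c_0(A,B)$, so $c(K_1,K_2)<1=c_0(K_1,K_2)$ and the two cosines differ.

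The step I expect to be the main obstacle is the compactness argument in \ref{prop:cK1K2:c01}: it is exactly finite-dimensionality of $\mathcal{K}$ that promotes the approximating sequence $(x_n)$ to a convergent subsequence and thereby manufactures a genuine nonzero vector common to $K_1$ and $K_2$. Without it the implication fails --- there exist closed convex cones with $c_0(K_1,K_2)=1$ yet $K_1\cap K_2=\{0\}$ --- so the hypothesis that $K_1$ or $K_2$ lies in a finite-dimensional subspace is essential rather than cosmetic.
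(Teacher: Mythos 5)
Your proof is correct, but there is nothing in the paper to compare it against: the statement is imported as a \emph{Fact}, cited from \cite{BOWJMAA} (Proposition~4.4) in the auxiliary-results section, and the paper gives no proof of it. Judged on its own merits, your reconstruction is sound and self-contained: the forward implications are exactly \cref{lemma:cK1K2}\cref{lemma:cK1K2:1}\&\cref{lemma:cK1K2:0EQ} (which hold without any finite-dimensionality); \cref{prop:cK1K2:c01:EQ} is indeed just the contrapositive reformulation of \cref{prop:cK1K2:c01} given $c_0\in[0,1]$; the compactness argument (sequences from \cref{fact:coBS}, a convergent subsequence of $(x_n)$ in $K_1\cap\mathbf{S}_{\mathcal H}\subseteq\mathcal K$, and $\norm{x_n-y_n}^2=2-2\innp{x_n,y_n}\to 0$) correctly produces a unit vector in $K_1\cap K_2$; and the reduction of \cref{prop:cK1K2:EQ} to \cref{prop:cK1K2:c01:EQ} via $A=K_1\cap(K_1\cap K_2)^{\ominus}$, $B=K_2\cap(K_1\cap K_2)^{\ominus}$, $A\cap B=(K_1\cap K_2)\cap(K_1\cap K_2)^{\ominus}=\{0\}$, together with \cref{fact:coneK1K2}, is valid. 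Two minor presentational points: in \cref{prop:cK1K2:EQ} you silently reuse the normalization $K_1\subseteq\mathcal K$ that you justified (via symmetry of $c_0$) only inside the proof of \cref{prop:cK1K2:c01}; since both $c_0$ and $c$ are symmetric in their arguments, it would be cleaner to fix this reduction once at the start. Also, when invoking \cref{fact:coBS} you should note that $K_1$ and $K_2$ being closed convex cones gives $\overline{\cone}(K_i)=K_i$, so the sequences really lie in $K_i\cap\mathbf{S}_{\mathcal H}$; this is immediate but worth a word.
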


\begin{fact} {\rm \cite[Theorem~4.7]{BOWJMAA}}\label{theorem:K1K2closed}	
	Let $K_{1}$ and $K_{2}$ be  nonempty closed convex cones in
	$\mathcal{H}$. If $c_{0}(K_{1}, K_{2}) <1$,
	then $K_{1} - K_{2}$ is closed.
\end{fact}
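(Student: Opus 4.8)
The plan is to establish closedness directly: take an arbitrary sequence in $K_{1}-K_{2}$ converging to some point and show the limit is again a difference of points from $K_{1}$ and $K_{2}$. Concretely, suppose $z_{n}=x_{n}-y_{n}\to z$ with $(x_{n})$ in $K_{1}$ and $(y_{n})$ in $K_{2}$; I want $z\in K_{1}-K_{2}$. The whole point of the hypothesis $c_{0}(K_{1},K_{2})<1$ is that it bounds the angle between the cones away from zero, which I will use to control the representing sequences $(x_{n})$ and $(y_{n})$. Since $K_{1}$ and $K_{2}$ are closed convex cones, $\overline{\cone}(K_{i})=K_{i}$, so \cref{fact:AnglesProperties}(ii) applies and gives $\innp{x_{n},y_{n}}\leq c_{0}(K_{1},K_{2})\norm{x_{n}}\norm{y_{n}}$ for every $n$.

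The key step is a coercivity estimate. Expanding the norm and inserting the angle inequality,
\[
\norm{z_{n}}^{2}=\norm{x_{n}}^{2}+\norm{y_{n}}^{2}-2\innp{x_{n},y_{n}}\geq \norm{x_{n}}^{2}+\norm{y_{n}}^{2}-2c_{0}(K_{1},K_{2})\norm{x_{n}}\norm{y_{n}},
\]
and since $2\norm{x_{n}}\norm{y_{n}}\leq \norm{x_{n}}^{2}+\norm{y_{n}}^{2}$, this yields
\[
\norm{z_{n}}^{2}\geq \big(1-c_{0}(K_{1},K_{2})\big)\big(\norm{x_{n}}^{2}+\norm{y_{n}}^{2}\big).
\]
Because $(z_{n})$ converges it is bounded, and because $1-c_{0}(K_{1},K_{2})>0$ by hypothesis, I conclude that both $(x_{n})$ and $(y_{n})$ are bounded.

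From here the argument is standard weak compactness. Passing to a subsequence, I may assume $x_{n}\weakly x$ and $y_{n}\weakly y$ for some $x,y\in\mathcal{H}$. A closed convex set is weakly closed, so $x\in K_{1}$ and $y\in K_{2}$, and hence $x_{n}-y_{n}\weakly x-y$. On the other hand $x_{n}-y_{n}=z_{n}\to z$, so the subsequence converges weakly to $z$; by uniqueness of weak limits $z=x-y\in K_{1}-K_{2}$, which proves that $K_{1}-K_{2}$ is closed.

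I expect the only genuine subtlety to lie in the passage to limits in the infinite-dimensional setting: there I must invoke the weak sequential compactness of the bounded sequences just produced together with the weak closedness of the closed convex cones $K_{1}$ and $K_{2}$, rather than ordinary subsequential convergence (to which the step collapses when $\mathcal{H}$ is finite-dimensional). The true heart of the proof, however, is the coercivity estimate $\norm{z_{n}}^{2}\geq(1-c_{0})(\norm{x_{n}}^{2}+\norm{y_{n}}^{2})$: it is exactly this bound that forces boundedness of the representations, and it degenerates precisely when $c_{0}(K_{1},K_{2})=1$, i.e.\ when the two cones admit a common limiting direction along which $x_{n}-y_{n}$ can stay small while $x_{n},y_{n}$ escape to infinity. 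This also signals where sharpness examples should live.
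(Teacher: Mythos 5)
Your proof is correct: the coercivity estimate $\norm{z_{n}}^{2}\geq\bigl(1-c_{0}(K_{1},K_{2})\bigr)\bigl(\norm{x_{n}}^{2}+\norm{y_{n}}^{2}\bigr)$ (valid since $0\leq c_{0}(K_{1},K_{2})<1$, which lets you absorb the cross term $2c_{0}\norm{x_{n}}\norm{y_{n}}$) forces the representing sequences to be bounded, after which weak sequential compactness and the weak closedness of the closed convex cones $K_{1}$ and $K_{2}$ identify the limit as an element of $K_{1}-K_{2}$. Note that the paper itself does not prove this statement --- it is imported as a fact from \cite{BOWJMAA} --- and your argument is essentially the same standard one used there.
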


\begin{fact} {\rm \cite[Theorem~4.11]{BOWJMAA}}\label{theorem:KominusMperpNeq0}
	Let $K_{1} $ and $K_{2}$  be  closed convex cones in $\mathcal{H}$
	such that $K_{1} \cap K_{2}=\{0\}$ and $K_1$ is not linear.
	Furthermore, suppose that one of the following holds:
	\begin{enumerate}
		\item \label{theorem:KominusMperpNeq0:M} There exists $u \in \mathcal{H}$ such that $K_{2} =\{u  \}^{\perp}$.
		\item \label{theorem:KominusMperpNeq0:H} There exists $u \in \mathcal{H}$
		such that  $K_{2} \subseteq \{u\}^\perp$ and $\{u\}^\perp \cap K_{1} =\{0\}$.
		\item \label{theorem:KominusMperpNeq0:K}
		There exists a finite-dimensional linear subspace $\mathcal{K}$ of $\mathcal{H}$
		such that $K_{1} \subseteq  \mathcal{K}$ or $K_{2} \subseteq  \mathcal{K}$.
	\end{enumerate}
	Then
	\begin{align*}
	K^{\ominus}_{1} \cap K^{\oplus}_{2} \neq \{0\} \quad \text{and} \quad K^{\oplus}_{1} \cap K^{\ominus}_{2} \neq \{0\}.
	\end{align*}
\end{fact}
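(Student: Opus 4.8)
The plan is to collapse both displayed conclusions into a single density statement about a difference cone. First I would note that the two conclusions are equivalent: since $C^{\oplus}=-C^{\ominus}$ (\cref{lemma:polardual}), we get $K_1^{\oplus}\cap K_2^{\ominus}=(-K_1^{\ominus})\cap(-K_2^{\oplus})=-(K_1^{\ominus}\cap K_2^{\oplus})$, so one set is trivial exactly when the other is. Next, using the elementary identity $(A+B)^{\ominus}=A^{\ominus}\cap B^{\ominus}$ for convex cones together with $(-K_2)^{\ominus}=K_2^{\oplus}$, I would record $(K_1-K_2)^{\ominus}=K_1^{\ominus}\cap K_2^{\oplus}$. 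Since $K_1-K_2$ is itself a convex cone, $\overline{\cone}(K_1-K_2)=\overline{K_1-K_2}$, so by \cref{fact:dualcone} we have $K_1^{\ominus}\cap K_2^{\oplus}=\{0\}\Leftrightarrow(K_1-K_2)^{\ominus\ominus}=\mathcal H\Leftrightarrow\overline{K_1-K_2}=\mathcal H$. Thus the entire theorem is equivalent to proving $\overline{K_1-K_2}\neq\mathcal H$.

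The engine for (i) and (ii) is a one-sided lemma: if a convex cone $K$ is not a linear subspace and $K\cap\{u\}^{\perp}=\{0\}$, then $u\in K^{\ominus}$ or $-u\in K^{\ominus}$. I would prove this by contradiction. If there were $a,b\in K$ with $\langle a,u\rangle>0>\langle b,u\rangle$, then the segment $[a,b]\subseteq K$ would meet $\{u\}^{\perp}$ at a point forced to be $0$, so $a$ and $b$ are antiparallel and the whole line $\mathbb{R}b\subseteq K$; running the same argument against any third nonzero $k\in K$ forces $k\in\mathbb{R}b$, whence $K=\mathbb{R}b$ is a subspace (\cref{lemma:K}), a contradiction. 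Granting this, both (i) and (ii) are immediate: in each case $u\neq 0$ (else $\{u\}^{\perp}=\mathcal H$ and $K_1=\{0\}$ would be linear), $K_1\cap\{u\}^{\perp}=\{0\}$, and $K_2\subseteq\{u\}^{\perp}$. The lemma gives $u\in K_1^{\ominus}$ or $-u\in K_1^{\ominus}$, while $K_2\subseteq\{u\}^{\perp}$ forces $\langle K_2,\pm u\rangle=0$, so $\pm u\in K_2^{\oplus}$; whichever of $\pm u$ lies in $K_1^{\ominus}$ is then a nonzero element of $K_1^{\ominus}\cap K_2^{\oplus}$.

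For (iii) I would first upgrade density to closedness: since one cone lies in the finite-dimensional $\mathcal K$ and $K_1\cap K_2=\{0\}$, \cref{prop:cK1K2} gives $c_0(K_1,K_2)<1$, so by \cref{theorem:K1K2closed} the cone $K_1-K_2$ is closed, and it suffices to show $K_1-K_2\neq\mathcal H$. Suppose not. Here the role of non-linearity of $K_1$ is that a convex cone which is not a subspace does not contain $0$ in its relative interior, so $\operatorname{ri}K_1\cap\operatorname{ri}K_2\subseteq K_1\cap K_2=\{0\}$ is empty; the finite-dimensional proper separation theorem then produces a nonzero $y$ with $\langle K_1,y\rangle\le 0\le\langle K_2,y\rangle$, i.e. $y\in K_1^{\ominus}\cap K_2^{\oplus}$, contradicting $K_1-K_2=\mathcal H$. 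That $K_1$ must be non-linear is genuinely needed: for complementary subspaces $M\oplus N=\mathcal H$ one has $M-N=\mathcal H$ and $M^{\ominus}\cap N^{\oplus}=\{0\}$, so the conclusion fails.

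I expect the main obstacle to be making this separation rigorous, since only \emph{one} of the two cones is assumed finite-dimensional. When $K_1\subseteq\mathcal K$ I would intersect the supposed identity $K_1-K_2=\mathcal H$ with $\mathcal K$ to obtain the genuinely finite-dimensional identity $K_1-(K_2\cap\mathcal K)=\mathcal K$ and separate inside $\mathcal K$. When instead $K_2\subseteq\mathcal K$ is pointed, I would separate a compact base $\operatorname{conv}(K_2\cap\mathbf S_{\mathcal H})$ of $K_2$ from the closed cone $K_1$ via a compact–closed separation theorem. The delicate residual case is $K_2\subseteq\mathcal K$ not pointed (equivalently $K_1\cap\mathcal K$ linear), where any admissible $y$ must be orthogonal to the lineality space $L$ of $K_2$; there I would pass to $L^{\perp}$, replace $K_2$ by its pointed part $K_2\cap L^{\perp}$, check that the projection $P_{L^{\perp}}K_1$ still meets it only at the origin, and again use that $K_1$ is not a subspace to rule out the projected cone being dense — this last density check is the crux of the argument.
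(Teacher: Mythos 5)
The paper itself gives no proof of this statement --- it is quoted as a fact from \cite{BOWJMAA} --- so your argument has to stand on its own, and most of it does. The reduction of both conclusions to $\overline{K_1-K_2}\neq\mathcal{H}$ via $(K_1-K_2)^{\ominus}=K_1^{\ominus}\cap K_2^{\oplus}$ is correct; your one-sided lemma with the segment argument settles \cref{theorem:KominusMperpNeq0:M} and \cref{theorem:KominusMperpNeq0:H} completely; and within \cref{theorem:KominusMperpNeq0:K}, both the sub-case $K_1\subseteq\mathcal{K}$ (the identity $(K_1-K_2)\cap\mathcal{K}=K_1-(K_2\cap\mathcal{K})$ does make the separation genuinely finite-dimensional, and non-linearity of $K_1$ gives $0\notin\operatorname{ri}K_1$, hence disjoint relative interiors) and the sub-case where $K_2\subseteq\mathcal{K}$ is pointed (strong separation of the compact base $\conv(K_2\cap\mathbf{S}_{\mathcal{H}})$ from the closed cone $K_1$) are sound.

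The genuine gap is exactly the residual case you flag as ``the crux'': $K_2\subseteq\mathcal{K}$ with nontrivial lineality space $L$. Your compact-base separation inside $L^{\perp}$ requires the base of $K_2\cap L^{\perp}$ to be disjoint from $\overline{\Pro_{L^{\perp}}K_1}$, but you only verify $\Pro_{L^{\perp}}K_1\cap(K_2\cap L^{\perp})=\{0\}$, and the closure of a projected cone can acquire new points; worse, when $K_2=L$ is itself linear, the needed non-density of $\Pro_{L^{\perp}}K_1$ in $L^{\perp}$ \emph{is} the conclusion, and you prove neither. The gap is fillable with the two facts you already used for $K_1-K_2$, now applied to the pair $(K_1,L)$: since $L\subseteq K_2$ forces $K_1\cap L=\{0\}$ and $L$ is finite-dimensional, \cref{prop:cK1K2}\cref{prop:cK1K2:c01:EQ} gives $c_0(K_1,L)<1$, whence \cref{theorem:K1K2closed} makes $K_1-L=K_1+L$ closed; since $\Pro_{L^{\perp}}K_1=(K_1+L)\cap L^{\perp}$, the projected cone is in fact closed, and its disjointness from the base then follows from the intersection check you did perform, so your separation goes through. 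Non-linearity also transfers as you hoped: if $\Pro_{L^{\perp}}K_1$ were linear, then for each $v\in K_1$ there would be $w\in K_1$ with $\Pro_{L^{\perp}}(v+w)=0$, so $v+w\in K_1\cap L=\{0\}$, i.e.\ $-v=w\in K_1$, making $K_1$ linear by \cref{lemma:K}; this disposes of the case $K_2=L$, where $\Pro_{L^{\perp}}K_1=L^{\perp}$ would be linear. With these additions (plus the trivial case $K_2=\{0\}$, where $K_1^{\ominus}\neq\{0\}$ because the non-linear cone $K_1$ cannot equal $\mathcal{H}$), your proof is complete.
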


\section{Positive results}

\label{sec:pos}

We begin with a simple generalization of
\cite[Lemma~10.2]{Deutsch1995} from two linear subspaces to one cone and one
linear subspace.

\begin{lemma} \label{lemma:MK}
	Let $K  $ be a nonempty closed convex cone in $\mathcal{H} $ and let $M   $ be a
	closed linear subspace of $\mathcal{H}$. Then
	\begin{align*}
	(\forall x \in K) (\forall  y \in M) \quad  \abs{\innp{x,y}} \leq c_{0}(K,M) \norm{x}\norm{y}.
	\end{align*}
\end{lemma}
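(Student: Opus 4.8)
The plan is to obtain the two-sided bound from the one-sided inequality already recorded in the excerpt, and then to exploit the linearity of $M$ to control the sign. First I would observe that, because $K$ is a closed convex cone, we have $\overline{\cone}(K)=K$, and because $M$ is a closed linear subspace we likewise have $\overline{\cone}(M)=M$ (a nonempty linear subspace is already a closed cone). Hence the hypotheses $x\in K$ and $y\in M$ coincide with $x\in\overline{\cone}(K)$ and $y\in\overline{\cone}(M)$, which is precisely the setting in which \cref{fact:AnglesProperties}(ii) applies.

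Applying that inequality with $C=K$ and $D=M$ immediately yields
\begin{equation*}
(\forall x \in K)(\forall y \in M) \quad \innp{x,y} \leq c_{0}(K,M)\norm{x}\norm{y},
\end{equation*}
which takes care of the upper estimate on $\innp{x,y}$. For the matching lower estimate, the key point is that $M$ is a linear subspace, so $-y\in M$ whenever $y\in M$. Substituting $-y$ for $y$ in the displayed inequality gives $\innp{x,-y}\leq c_{0}(K,M)\norm{x}\norm{-y}$, that is, $-\innp{x,y}\leq c_{0}(K,M)\norm{x}\norm{y}$. Combining the two one-sided inequalities produces $\abs{\innp{x,y}}\leq c_{0}(K,M)\norm{x}\norm{y}$, which is the assertion.

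I do not expect any genuine obstacle: the whole content is the observation that the asymmetry between the cone $K$ and the subspace $M$ is harmless precisely because the central symmetry of $M$ lets us replace $y$ by $-y$ without leaving $M$. It is worth flagging that this is exactly where the linear structure of $M$ enters; if $M$ were merely a convex cone, the argument would deliver only the one-sided bound $\innp{x,y}\leq c_{0}(K,M)\norm{x}\norm{y}$, and the absolute-value version could genuinely fail.
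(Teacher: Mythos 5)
Your proof is correct and follows essentially the same route as the paper's: both invoke \cref{fact:AnglesProperties}\cref{fact:AnglesProperties:ineq} and then use the linearity of $M$ to replace $y$ by $-y$, yielding the two-sided bound. The only cosmetic difference is the order of the arguments ($C=K$, $D=M$ versus the paper's $C=M$, $D=K$), which is immaterial since $c_0$ is symmetric.
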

\begin{proof}
	Let $x \in K$ and $ y \in M$. Then $-y\in M$ as well because $M$ is linear.
	Applying
	\cref{fact:AnglesProperties}\cref{fact:AnglesProperties:ineq} with $C=M$ and
	$D=K$ to obtain that
	$\pm\innp{x,y}=	\innp{x,\pm y} \leq c_{0}(K,M) \norm{x}\norm{\pm y}
	=c_{0}(K,M) \norm{x}\norm{y}$
	and the result follows.
\end{proof}

When one cone is contained in another, then the following pleasing result holds:

\begin{proposition} \label{prop:K1K1c=0}
	Let $K_{1}$ and $K_{2}$ be nonempty closed convex cones in $\mathcal{H}$
	such that $K_{1} \subseteq K_{2}$.
	Then
	\begin{equation*}
	c(K_{1}, K_{2}) =0=c(K^{\ominus}_{1}, K^{\ominus}_{2}).
	\end{equation*}
\end{proposition}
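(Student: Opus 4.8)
The plan is to reduce both equalities to a single elementary observation: for any closed convex cone $K$ one has $K \cap K^{\ominus} = \{0\}$, since $x \in K \cap K^{\ominus}$ forces $\innp{x,x} \leq \sup\innp{K,x} \leq 0$ and hence $x = 0$. Coupled with this, I would use the fact that whenever one of the two sets defining a Dixmier angle $c_0$ reduces to $\{0\}$, the angle is $0$, because $\overline{\cone}(\{0\}) = \{0\}$ makes the defining supremum vanish. The inclusion $K_1 \subseteq K_2$ enters only to collapse the ``filtering'' cone $(K_1 \cap K_2)^{\ominus}$ into a single polar cone, after which the observation applies to each statement.

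For the first equality, I would start from \cref{fact:coneK1K2}, which for nonempty closed convex cones gives $c(K_1,K_2) = c_0\big(K_1 \cap (K_1 \cap K_2)^{\ominus},\, K_2 \cap (K_1 \cap K_2)^{\ominus}\big)$. Since $K_1 \subseteq K_2$ yields $K_1 \cap K_2 = K_1$, the filtering cone is $(K_1 \cap K_2)^{\ominus} = K_1^{\ominus}$, so the first set in the $c_0$ becomes $K_1 \cap K_1^{\ominus} = \{0\}$ by the observation above. Hence $c(K_1, K_2) = c_0(\{0\}, \cdot) = 0$.

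For the second equality, I would again apply \cref{fact:coneK1K2}, this time to the pair $(K_1^{\ominus}, K_2^{\ominus})$, which are nonempty closed convex cones by \cref{fact:dualcone}\cref{fact:dualcone:o}. Now $K_1 \subseteq K_2$ reverses to $K_2^{\ominus} \subseteq K_1^{\ominus}$, so $K_1^{\ominus} \cap K_2^{\ominus} = K_2^{\ominus}$ and therefore $(K_1^{\ominus} \cap K_2^{\ominus})^{\ominus} = K_2^{\ominus\ominus} = K_2$ by \cref{fact:dualcone}\cref{fact:dualcone:closedconvex}. The \emph{second} set in the resulting $c_0$ is then $K_2^{\ominus} \cap K_2 = \{0\}$, again by the observation, giving $c(K_1^{\ominus}, K_2^{\ominus}) = c_0(\cdot, \{0\}) = 0$.

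I do not anticipate a genuine obstacle; the proof is short once \cref{fact:coneK1K2} is in hand. The only point requiring care is the bookkeeping of which of the two sets collapses to $\{0\}$ — the first argument in the primal statement versus the second in the dual statement — together with checking that the surviving set is a legitimate nonempty convex argument for $c_0$, which holds since each such set always contains $0$.
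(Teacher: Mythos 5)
Your proof is correct and follows essentially the same route as the paper: both apply \cref{fact:coneK1K2}, use $K_1 \subseteq K_2$ to identify the filtering cone, and conclude by noting that one of the two sets in the resulting $c_0$ collapses to $\{0\}$ because a closed convex cone meets its polar only at the origin. The only (inessential) variation is in the dual statement, where the paper computes $(K_1^{\ominus} \cap K_2^{\ominus})^{\ominus} = \overline{K_1 + K_2} = K_2$ via \cref{fact:K1:Km:dualsum} together with $K_1+K_2=K_2$, whereas you reach the same identity slightly more directly through antitonicity of the polar operation ($K_2^{\ominus} \subseteq K_1^{\ominus}$, hence $K_1^{\ominus}\cap K_2^{\ominus}=K_2^{\ominus}$) followed by the bipolar identity of \cref{fact:dualcone}\cref{fact:dualcone:closedconvex}.
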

\begin{proof}
	Because $K_{1} \subseteq K_{2}$, we have $K_{1} \cap K_{2}=K_{1}$ and $(K_{1} \cap
	K_{2})^{\ominus}=K_{1}^{\ominus}$. Now $K_{1}$ is a nonempty closed
	cone and so $0 \in K_{1}$. Hence, by \cref{lemma:K}, we have that $K_{2}=
	\{0\} +K_{2} \subseteq K_{1}+K_{2} \subseteq K_{2} +K_{2} =K_{2}$,
	which implies that $K_{1}+K_{2}= K_{2}$.
	Combine this with \cref{fact:K1:Km:dualsum} and  \cref{fact:dualcone}\cref{fact:dualcone:closedconvex} to see that $(K_{1}^{\ominus} \cap K_{2}^{\ominus})^{\ominus} =\overline{K_{1}^{\ominus\ominus} +K_{2}^{\ominus\ominus} } =\overline{K_{1} +K_{2}}=\overline{K_{2}}=K_{2}$.
	Moreover, using \cref{fact:coneK1K2} and \cref{defn:Angles}, we obtain
	\begin{align*}
	c(K_{1},K_{2}) &= c_{0} \left(  K_{1}  \cap (K_{1}\cap K_{2})^{\ominus}, K_{2} \cap (K_{1}\cap K_{2})^{\ominus} \right) = c_{0} \left(  K_{1}  \cap  K_{1}^{\ominus}, K_{2} \cap  K_{1}^{\ominus} \right)\\
	&= c_{0} \left(  \{0\}, K_{2} \cap K_{1}^{\ominus} \right)=0
	\end{align*}
	and
	\begin{align*}
	c(K_{1}^{\ominus},K_{2}^{\ominus}) &= c_{0} \left(  K_{1}^{\ominus}  \cap (K_{1}^{\ominus} \cap K_{2}^{\ominus})^{\ominus}, K_{2}^{\ominus} \cap (K_{1}^{\ominus} \cap K_{2}^{\ominus})^{\ominus} \right)
	= c_{0} \left(  K_{1}^{\ominus} \cap K_{2}, \{0\}\right)=0,
	\end{align*}
	which completes the proof.
\end{proof}

We are now ready for our first main result --- the concial extension of
Hundal's Lemma (\cref{f:Hundal}):

\begin{theorem} {\bf (concial extension of Hundal's Lemma)} \label{prop:KMoplus}
	Let $ K_{1}$ and $ K_{2}$  be nonempty closed convex cones in $ \mathcal{H}$.
	Suppose that $c_{0} (K_{1},K_{2}) <1$ and that $X$ is a convex subset of $\mathcal{H}$ which contains $K_{1}-K_{2}$.  Then
	\begin{align*}
	c_{0} (K_{1}, K_{2})  \leq c_{0} (K_{1}^{\oplus} \cap X , K_{2}^{\ominus} \cap X ) \leq c_{0} (K_{1}^{\oplus}   , K_{2}^{\ominus} ).
	\end{align*}
\end{theorem}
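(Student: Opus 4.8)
My plan is to treat the two inequalities separately, the rightmost one being routine and the leftmost one carrying all the content. For the rightmost inequality, observe first that since $K_1$ and $K_2$ are nonempty closed convex cones we have $0\in K_1\cap K_2$, so $0=0-0\in K_1-K_2\subseteq X$; as $0$ also lies in $K_1^{\oplus}$ and in $K_2^{\ominus}$, the sets $K_1^{\oplus}\cap X$ and $K_2^{\ominus}\cap X$ are nonempty and convex. Since $K_1^{\oplus}\cap X\subseteq K_1^{\oplus}$ and $K_2^{\ominus}\cap X\subseteq K_2^{\ominus}$, the rightmost inequality is then immediate from the monotonicity of $c_0$ recorded in \cref{lemma:CD}\cref{lemma:CD:UV}. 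It remains to prove $c_0(K_1,K_2)\le c_0(K_1^{\oplus}\cap X, K_2^{\ominus}\cap X)$. Writing $c:=c_0(K_1,K_2)$, I may assume $c>0$ (so in particular $K_1\ne\{0\}\ne K_2$), since otherwise $c=0$ and the inequality holds trivially because $c_0\ge 0$.

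Next I would produce near-optimal unit vectors and project them onto the opposite cone. By \cref{fact:coBS} there exist $x_n\in K_1\cap\mathbf{S}_{\mathcal{H}}$ and $y_n\in K_2\cap\mathbf{S}_{\mathcal{H}}$ with $t_n:=\langle x_n,y_n\rangle\to c$. Set $p_n:=\Pro_{K_2}x_n$ and $q_n:=\Pro_{K_1}y_n$. The decisive structural input is \cref{fact:chara:PK}: it gives $x_n-p_n\in K_2^{\ominus}$ with $\langle x_n-p_n,p_n\rangle=0$, and $y_n-q_n\in K_1^{\ominus}$ with $\langle y_n-q_n,q_n\rangle=0$. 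I therefore define $v_n:=x_n-p_n\in K_2^{\ominus}$ and $u_n:=q_n-y_n=-(y_n-q_n)\in -K_1^{\ominus}=K_1^{\oplus}$. Crucially, $v_n\in K_1-K_2\subseteq X$ (since $x_n\in K_1$, $p_n\in K_2$) and likewise $u_n\in K_1-K_2\subseteq X$, so $v_n\in K_2^{\ominus}\cap X$ and $u_n\in K_1^{\oplus}\cap X$ are admissible test vectors for the middle quantity.

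The core of the argument is a short list of scalar estimates. From $\langle x_n-p_n,p_n\rangle=0$ I get $\langle x_n,p_n\rangle=\|p_n\|^2$, hence $\|v_n\|^2=1-\|p_n\|^2$, and symmetrically $\|u_n\|^2=1-\|q_n\|^2$. Testing $x_n-p_n\in K_2^{\ominus}$ against $y_n\in K_2$ yields $\langle y_n,p_n\rangle\ge t_n$, and testing $y_n-q_n\in K_1^{\ominus}$ against $x_n\in K_1$ yields $\langle x_n,q_n\rangle\ge t_n$; combining these with Cauchy--Schwarz and the identities $\langle x_n,p_n\rangle=\|p_n\|^2$, $\langle y_n,q_n\rangle=\|q_n\|^2$ together with \cref{fact:AnglesProperties}\cref{fact:AnglesProperties:ineq} shows $t_n\le\|p_n\|\le c$ and $t_n\le\|q_n\|\le c$, so by the squeeze $\|p_n\|\to c$ and $\|q_n\|\to c$. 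Expanding the inner product gives
\begin{equation*}
\langle u_n,v_n\rangle=\langle q_n-y_n,\,x_n-p_n\rangle=\langle x_n,q_n\rangle-\langle p_n,q_n\rangle-t_n+\langle y_n,p_n\rangle\ \ge\ t_n-\langle p_n,q_n\rangle .
\end{equation*}
Here is the one place where a genuine obstacle appears: the cross term $\langle p_n,q_n\rangle$, with $q_n\in K_1$ and $p_n\in K_2$, is \emph{not} controlled by the projection identities. I would dispatch it with the angle inequality \cref{fact:AnglesProperties}\cref{fact:AnglesProperties:ineq}, which gives $\langle p_n,q_n\rangle\le c\,\|p_n\|\,\|q_n\|$ and hence $\langle u_n,v_n\rangle\ge t_n-c\,\|p_n\|\,\|q_n\|$. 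Finally, for all large $n$ both $\|u_n\|$ and $\|v_n\|$ are positive (they tend to $\sqrt{1-c^2}>0$), so the definition of $c_0$ gives
\begin{equation*}
c_0(K_1^{\oplus}\cap X,\, K_2^{\ominus}\cap X)\ \ge\ \frac{\langle u_n,v_n\rangle}{\|u_n\|\,\|v_n\|}\ \ge\ \frac{t_n-c\,\|p_n\|\,\|q_n\|}{\sqrt{(1-\|p_n\|^2)(1-\|q_n\|^2)}} .
\end{equation*}
Letting $n\to\infty$, the right-hand side converges to $(c-c^3)/(1-c^2)=c$, which yields the desired inequality. The only subtle point is the treatment of the cross term; everything else is bookkeeping with the projection characterization, the squeeze on $\|p_n\|,\|q_n\|$, and the definition of $c_0$ via the unit sphere.
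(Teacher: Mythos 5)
Your proof is correct, but its core construction is genuinely different from the paper's. Both arguments open the same way: the right inequality and the reduction to test vectors inside $K_1-K_2$ come from monotonicity (\cref{lemma:CD}\cref{lemma:CD:UV}), the case $c_0(K_1,K_2)=0$ is trivial, and near-optimal unit sequences are produced via \cref{fact:coBS}. They then diverge. The paper follows Deutsch's proof of Hundal's Lemma and works with a \emph{single} sequence $(x_k)$, pairing $x_k-\Pro_{K_2}x_k\in K_2^{\ominus}\cap(K_1-K_2)$ with the \emph{double-projection} vector $\Pro_{K_1}\Pro_{K_2}x_k-\Pro_{K_2}x_k\in K_1^{\oplus}\cap(K_1-K_2)$; its key estimate is $\alpha_k-\sqrt{(c_0^2-\alpha_k^2)/(1-c_0^2)}$ with $\alpha_k=\norm{\Pro_{K_2}x_k}$, and the computational details are deferred to the techniques of \cite[Lemma~2.14]{Deutsch1995}. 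You instead treat the two sequences symmetrically, using the single-projection residuals $v_n=x_n-\Pro_{K_2}x_n$ and $u_n=\Pro_{K_1}y_n-y_n$; this forces the cross term $\innp{p_n,q_n}$ to appear, which the paper's asymmetric choice avoids, and you dispatch it correctly with the angle inequality \cref{fact:AnglesProperties}\cref{fact:AnglesProperties:ineq}, after which the limit $(c-c^3)/(1-c^2)=c$ closes the argument. What your route buys: it is fully self-contained (no appeal to an external proof's ``same techniques''), and the squeeze $t_n\le\norm{p_n}\le c$, $t_n\le\norm{q_n}\le c$ together with the uniform bound $1-\norm{p_n}^2\ge 1-c^2>0$ on the denominators makes every step explicit. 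What the paper's route buys: it needs near-maximizers on only one side and stays structurally parallel to the linear-subspace argument it generalizes, so the lineage from Hundal's original lemma is transparent. One small improvement to your write-up: since $\norm{p_n}\le c<1$ and $\norm{q_n}\le c<1$ for all $n$, the vectors $u_n,v_n$ are nonzero for \emph{every} $n$, so no passage to a tail of the sequence is actually needed.
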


\begin{proof}
	\cref{lemma:CD}\cref{lemma:CD:UV} yields
	\begin{align*}
	c_{0} (K_{1}^{\oplus} \cap (K_{1} -K_{2} ) , K_{2}^{\ominus} \cap (K_{1}-K_{2} ) )\leq c_{0} (K_{1}^{\oplus} \cap X , K_{2}^{\ominus} \cap X ) \leq c_{0} (K_{1}^{\oplus}, K_{2}^{\ominus} ).
	\end{align*}
	Hence, it suffices to prove that $c_{0} (K_{1}, K_{2})  \leq c_{0} (K_{1}^{\oplus} \cap (K_{1} -K_{2} ) , K_{2}^{\ominus} \cap (K_{1}-K_{2} ) )$.
	
	If $c_{0} (K_{1},K_{2}) =0$, then $c_{0} (K_{1}, K_{2})  \leq c_{0} (K_{1}^{\oplus} \cap (K_{1} -K_{2} ) , K_{2}^{\ominus} \cap (K_{1}-K_{2} ) )$ is trivial.
	
	Now assume that $c_{0} (K_{1},K_{2}) \in \left]0,1\right[\,$.
	Then, by \cref{fact:coBS}, there exist sequences $(x_{k})_{k \in \mathbb{N}}$ in $K_{1} \cap \mathbf{S}_{\mathcal{H}}$ and  $(y_{k})_{k \in \mathbb{N}}$ in $K_{2} \cap \mathbf{S}_{\mathcal{H}}$ such that $\innp{x_{k}, y_{k}} \to c_{0} (K_{1},K_{2})$.
	
	Clearly, $K_{1}^{\oplus}=-K_{1}^{\ominus} $.
	Moreover, by \cref{fact:chara:PK},  $(\forall k \in \mathbb{N})$ $x_{k} -
	\Pro_{K_{2}}x_{k} \in K^{\ominus}_{2} \cap (K_{1} -K_{2}) $ and
	$\Pro_{K_{1}}\Pro_{K_{2}}x_{k}-\Pro_{K_{2}}x_{k} \in K_{1}^{\oplus} \cap (K_{1}
	-K_{2})$.
	Set
	$c_{0}:=c_{0} (K_{1}, K_{2})$ and
	$(\forall k \in \mathbb{N})$ $
	\alpha_{k}:=\norm{\Pro_{K_{2}}x_{k} } $.
	Hence, using \cref{lemma:CD}\cref{lemma:CD:UV}, \cref{fact:chara:PK},
	and the same techniques employed
	in the proof of \cite[Lemma~14]{Deutsch1995}, we obtain that $\alpha_{k} \to c_{0}$
	and that for every $k \in \mathbb{N}$,
	\begin{align*}
	c_{0} (K_{1}^{\oplus} \cap (K_{1} -K_{2} ) , K_{2}^{\ominus} \cap (K_{1}-K_{2} ) )
	&\geq   \frac{\biginnp{ \Pro_{K_{1}}\Pro_{K_{2}}x_{k}-\Pro_{K_{2}}x_{k} ,   x_{k} - \Pro_{K_{2}}x_{k} } }{\norm{\Pro_{K_{1}}\Pro_{K_{2}}x_{k}-\Pro_{K_{2}}x_{k}}  \norm{x_{k} - \Pro_{K_{2}}x_{k}}}\\
	&\geq   \alpha_{k}- \sqrt{ \frac{c^{2}_{0} -\alpha^{2}_{k} }{ 1-c^{2}_{0}}} \to c_{0},
	\end{align*}
	which completes the proof.
\end{proof}

\begin{remark}
	If $K_1$ and $K_2$ are linear in \cref{prop:KMoplus},
	then $K_1-K_2=K_1+K_2$, $K_1^\oplus = K_1^\perp$,
	$K_2^\ominus = K_2^\perp$, and we recover \cref{f:Hundal}.
\end{remark}

We conclude this section with further comments on \cref{{prop:KMoplus}} which are based
on the following example.

\begin{example}\label{exam:K1K2} {\rm \cite[Example~4.13]{BOWJMAA}}
	Suppose that $\mathcal{H} =\mathbb{R}^{2}$.
	Set $K_{1}:=\mathbb{R}^{2}_{+}$ and $K_{2}:= \{ (x_{1}, x_{2}) \in \mathbb{R}^{2} ~|~ -x_{1} \geq x_{2}   \}$.
	Then the following hold:
	\begin{enumerate}
		\item \label{exam:K1K2:ominus} $K_{1}^{\ominus}= \mathbb{R}^{2}_{-}$, $K_{2}^{\ominus}=\mathbb{R}_{+} (1,1)$, and $K_{2}^{\oplus} =\mathbb{R}_{+}(-1,-1)$.
		\item  \label{exam:K1K2:cap} $K_{1} \cap K_{2} =\{0\}$,  $K_{1}^{\ominus} \cap K_{2}^{\ominus} =\{0\}$, $K_{1}^{\ominus} \cap K_{2}^{\oplus} =K_{2}^{\oplus}$, $K_{1} +K_{2} =\mathbb{R}^{2}$, and $K_{1} - K_{2} =- K_{2}\neq\mathcal{H}$.
		\item \label{exam:K1K2:ineq} $c  (K_{1}, K_{2}) =c_{0} (K_{1}, K_{2}) =\tfrac{1}{\sqrt{2}} > 0=c_{0}  (K_{1}^{\ominus}, K_{2}^{\ominus}) =c_{0}  (K_{1}^{\oplus}, K_{2}^{\oplus}) =c (K_{1}^{\ominus}, K_{2}^{\ominus}) =c  (K_{1}^{\oplus}, K_{2}^{\oplus})$.
		
		\item \label{exam:K1K2:ineqoplus} $c_{0} (K_{1}^{\ominus},K_{2}^{\oplus}) =1$, $c  (K_{1}^{\ominus},K_{2}^{\oplus}) =0$, $c_{0} (K_{1},K_{2}) <  c_{0} (K_{1}^{\ominus},K_{2}^{\oplus}) $, and  $c  (K_{1},K_{2}) > c  (K_{1}^{\ominus},K_{2}^{\oplus}) $.
	\end{enumerate}
\end{example}

\begin{remark} {\rm \bf (importance of using dual-polar pair)}
	Let $K_1$ and $K_2$ be as in \cref{exam:K1K2}.
	Then $c_{0} (K_{1}, K_{2}) <1$ yet
	\begin{equation*}
	c_0(K_1,K_2) = \tfrac{1}{\sqrt{2}}
	>
	0 = c_0(K_1^\ominus,K_2^\ominus) = c_0(K_1^\oplus,K_1^\oplus).
	\end{equation*}
	This illustrates that in the conical extension of Hundal's Lemma
	(\cref{prop:KMoplus}), we indeed must work with a pair of
	dual-polar cones of the original pair --- substituting with
	a polar-polar or dual-dual pair will not work!
\end{remark}

\section{Negative results}

\label{sec:neg}
We start with characterizations for differences of convex cones to be linear subspaces.

\begin{lemma}\label{l:difference}
	Let $K_{1}$ and $K_{2}$ be nonempty closed convex cones in $\mathcal{H}$ such that  
	$K_{1}\cap K_{2}=\{0\}$. Then the following are equivalent:
	\begin{enumerate}
		\item\label{i:subspace} $K_{1}-K_{2}$ is a linear subspace of $\mathcal{H}$.
		\item\label{i:superset} $(-K_1)\cup K_2 \subseteq K_{1}-K_{2}$. 
		\item\label{i:bothsub} $K_{1}$ and $K_{2}$ are linear subspaces of $\mathcal{H}$.
	\end{enumerate}
\end{lemma}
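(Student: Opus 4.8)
The plan is to prove the cycle of implications $\ref{i:bothsub} \Rightarrow \ref{i:subspace} \Rightarrow \ref{i:superset} \Rightarrow \ref{i:bothsub}$, since the first and last arrows are essentially immediate and the interesting content lives in the final implication.

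For $\ref{i:bothsub} \Rightarrow \ref{i:subspace}$: if $K_1$ and $K_2$ are linear subspaces, then $K_1 - K_2 = K_1 + K_2$ is a sum of two linear subspaces and hence itself a linear subspace. For $\ref{i:subspace} \Rightarrow \ref{i:superset}$: assuming $K_1 - K_2$ is a linear subspace, I would first note that $K_1 = K_1 - \{0\} \subseteq K_1 - K_2$ (using $0 \in K_2$) and similarly $-K_2 = \{0\} - K_2 \subseteq K_1 - K_2$. Since $K_1 - K_2$ is now assumed linear, it is closed under negation, so from $K_1 \subseteq K_1 - K_2$ we get $-K_1 \subseteq K_1 - K_2$, and $K_2 = -(-K_2) \subseteq K_1 - K_2$. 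Together this gives $(-K_1) \cup K_2 \subseteq K_1 - K_2$, as required.

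The main work is the implication $\ref{i:superset} \Rightarrow \ref{i:bothsub}$, and this is where I expect the real obstacle to be. Assume $(-K_1) \cup K_2 \subseteq K_1 - K_2$. The strategy is to show $-K_1 \subseteq K_1$, which by \cref{lemma:K} forces $K_1$ to be a linear subspace (and symmetrically for $K_2$). Take any $x \in K_1$; then $-x \in -K_1 \subseteq K_1 - K_2$, so we may write $-x = a - b$ with $a \in K_1$, $b \in K_2$. This rearranges to $a = -x + b$, equivalently $b = a + x$. The key point to exploit is the hypothesis $K_1 \cap K_2 = \{0\}$: I want to extract from $-x = a - b$ enough information to conclude $-x \in K_1$. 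The natural move is to pin down the element $b \in K_2$: since $b = a + x$ with $a, x \in K_1$ and $K_1$ a convex cone, we have $b = a + x \in K_1 + K_1 = K_1$ (using \cref{lemma:K}), so $b \in K_1 \cap K_2 = \{0\}$, whence $b = 0$ and therefore $-x = a \in K_1$. This shows $-K_1 \subseteq K_1$. Symmetrically, using $K_2 \subseteq K_1 - K_2$: for $y \in K_2$, write $y = a' - b'$ with $a' \in K_1$, $b' \in K_2$, so $a' = y + b' \in K_2 + K_2 = K_2$, giving $a' \in K_1 \cap K_2 = \{0\}$ and hence $-y = b' \in K_2$, i.e.\ $-K_2 \subseteq K_2$.

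Having shown $-K_1 \subseteq K_1$ and $-K_2 \subseteq K_2$, a second application of \cref{lemma:K} (the statement that a convex cone $K$ with $-K \subseteq K$ is a linear subspace) immediately yields that both $K_1$ and $K_2$ are linear subspaces, completing the cycle. The delicate part is the very clean interplay in the last implication between the containment hypothesis, the cone-additivity $K_1 + K_1 = K_1$, and the triviality of the intersection; I would be careful to present both symmetric arguments explicitly rather than merely invoking symmetry, since the roles of $K_1$ and $K_2$ are slightly asymmetric in the statement $(-K_1) \cup K_2$.
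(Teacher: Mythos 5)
Your proposal is correct and follows essentially the same route as the paper's proof: the same cycle of implications, with the key step \cref{i:superset}$\Rightarrow$\cref{i:bothsub} handled identically by decomposing $-x=a-b$, using $K_1+K_1=K_1$ from \cref{lemma:K} to place $b$ in $K_1\cap K_2=\{0\}$, and then invoking \cref{lemma:K} again to conclude linearity from $-K_i\subseteq K_i$. The only cosmetic difference is the order in which the easy implications are presented and that you negate the containments termwise in \cref{i:subspace}$\Rightarrow$\cref{i:superset} where the paper negates the union in one step.
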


\begin{proof}
	``\cref{i:subspace}$\Rightarrow$\cref{i:superset}'': 
	We have 
	$K_1 = K_1 - \{0\} \subseteq K_1-K_2$ and 
	$-K_2 = \{0\}-K_2 \subseteq K_1-K_2$.
	Hence 
	$(-K_1)\cup K_2 = -(K_1\cup (-K_2)) \subseteq -(K_1-K_2)=K_1-K_2$.

	``\cref{i:superset}$\Rightarrow$\cref{i:bothsub}'': 
	Let $x\in -K_{1}$. Then there exist $y_1\in K_1$ and $y_2\in K_2$ such that 
	$x=y_1-y_2$.
	Then $y_2 = y_1-x \in K_1 - (-K_1) = K_1 + K_1 = K_1$.
	Hence $y_2 \in K_1\cap K_2 = \{0\}$. 
	Thus $y_2 = 0$ and so $x = y_1\in K_1$. 
	This implies $-K_1\subseteq K_1$ and therefore
	$K_1$ is linear by \cref{lemma:K}. 
	The argument for $K_2$ is similar. 
	
	``\cref{i:bothsub}$\Rightarrow$\cref{i:subspace}'': Obvious.
\end{proof}

Here is a conical variant of \cref{f:KKM}.

\begin{theorem} \label{theorem:K1K2ominusoplusEQ}
	Let $ K_{1}$ and $ K_{2}$  be  nonempty closed convex cones in $ \mathcal{H}$
	such that $K_{1} \cap K_{2} =\{0\}$ and $K_1-K_2=\mathcal{H}$.
	Then
	\begin{enumerate}
		\item  \label{theorem:K1K2ominusoplusEQ:linear} $K_{1}$ and $K_{2}$ are linear subspaces
		of $\mathcal{H}$. 
		\item  \label{theorem:K1K2ominusoplusEQ:eq}
		$c_{0} (K_{1}, K_{2})  =  c_{0} (K_{1}^{\oplus}   , K_{2}^{\ominus}   )= c_{0} (K_{1}^{\ominus}   , K_{2}^{\oplus}   )=
		c_{0}(K_{1}^{\perp}, K_{2}^{\perp}).$
	\end{enumerate}
\end{theorem}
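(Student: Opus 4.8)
The plan is to dispatch part~\cref{theorem:K1K2ominusoplusEQ:linear} first and then exploit the resulting linearity to reduce part~\cref{theorem:K1K2ominusoplusEQ:eq} entirely to the classical linear setting, so that the two parts are proved in that order.

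For part~\cref{theorem:K1K2ominusoplusEQ:linear}, I would simply invoke \cref{l:difference}. Indeed, the whole space $\mathcal{H}$ is trivially a linear subspace, so the hypothesis $K_1-K_2=\mathcal{H}$ says precisely that condition~\cref{i:subspace} of \cref{l:difference} holds. Since $K_1\cap K_2=\{0\}$ is assumed, the equivalence of \cref{i:subspace} and \cref{i:bothsub} immediately yields that $K_1$ and $K_2$ are linear subspaces of $\mathcal{H}$.

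For part~\cref{theorem:K1K2ominusoplusEQ:eq}, the key observation is that once $K_1$ and $K_2$ are known to be linear, \cref{lemma:polardual}\cref{lemma:polardual:llinear} collapses the polar and dual cones onto the orthogonal complement, giving $K_1^{\oplus}=K_1^{\ominus}=K_1^{\perp}$ and $K_2^{\oplus}=K_2^{\ominus}=K_2^{\perp}$. Substituting these identities shows at once that the three quantities $c_{0}(K_1^{\oplus},K_2^{\ominus})$, $c_{0}(K_1^{\ominus},K_2^{\oplus})$, and $c_{0}(K_1^{\perp},K_2^{\perp})$ are literally the same number. It therefore remains only to establish $c_{0}(K_1,K_2)=c_{0}(K_1^{\perp},K_2^{\perp})$, and this is exactly \cref{f:KKM}. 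To apply it I must verify its two hypotheses: $K_1\cap K_2=\{0\}$ holds by assumption, while $K_1+K_2=\mathcal{H}$ follows because, for linear subspaces, $-K_2=K_2$, so that $K_1-K_2=K_1+K_2$, which equals $\mathcal{H}$ by hypothesis.

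There is no genuine analytic obstacle here: the entire content of the theorem is that the hypothesis $K_1-K_2=\mathcal{H}$ is already strong enough to force linearity, after which everything reduces to the result of Krein, Krasnoselskii, and Milman. The one point that must not be skipped is the passage from $K_1-K_2$ to $K_1+K_2$, which is valid only because part~\cref{theorem:K1K2ominusoplusEQ:linear} has already guaranteed $-K_2=K_2$; this is the only reason the two parts must be handled in the stated order.
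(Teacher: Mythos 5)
Your proposal is correct and follows exactly the paper's own argument: part~(i) via \cref{l:difference} (since $\mathcal{H}$ is trivially a linear subspace), then part~(ii) by using linearity to get $K_1+K_2=K_1-K_2=\mathcal{H}$ and invoking \cref{f:KKM}. The only difference is that you spell out, via \cref{lemma:polardual}\cref{lemma:polardual:llinear}, why the dual--polar, polar--dual, and perp--perp quantities coincide --- a step the paper leaves implicit --- which is a welcome clarification rather than a deviation.
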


\begin{proof}
	\cref{theorem:K1K2ominusoplusEQ:linear}:
	Clear from \cref{l:difference}. 
	\cref{theorem:K1K2ominusoplusEQ:eq}: 
	In view of \cref{theorem:K1K2ominusoplusEQ:linear},
	we have $-K_2=K_2$ and thus $K_1+K_2=K_1-K_2=\mathcal{H}$. 
	Now apply \cref{f:KKM}.
\end{proof}

\begin{remark}{\bf (impossibility of extending \cref{f:KKM} to cones)}
	The assumption on the two subspaces in \cref{f:KKM} is that
	their intersection is $\{0\}$ and their sum is the entire space.
	Now assume that $K_2$ is a linear subspace
	and $K_1+K_2=K_1-K_2$.
	Then Lemma~\ref{l:difference} implies that $K_1$ is a subspace as well.
	Hence \cref{f:KKM} does not appear to admit an extension to the case
	when one linear subspace is replaced by a convex cone!
\end{remark}

\begin{remark}{\bf (importance of the assumption that $K_1-K_2=\mathcal{H}$)}
	Let $K_1$ and $K_2$ be as in \cref{exam:K1K2}.
	Then $K_1\cap K_2 = \{0\}$ but neither $K_1$ nor $K_2$ is a linear subspace.
	This shows that the assumption
	that $\mathcal{H} = K_{1}-K_{2}$  in \cref{theorem:K1K2ominusoplusEQ}
	is critical.
\end{remark}

\begin{remark}{\bf (importance of the assumption that $K_1\cap K_2=\{0\}$)}
	Let $u_{1}, u_{2}$ be unit vectors in $\mathbb{R}^3$ such that $0<\scal{u_{1}}{u_{2}}<1$. 
	Set $K_1:=\{u_{1}\}^{\perp}$ and $K_2=\{u_{2}\}^{\perp}$. Then 
	$K_{1}^{\perp}=\mathbb{R}u_{1}$, 
	$K_{2}^{\perp}=\mathbb{R}u_{2}$, 
	$K_1\cap K_2\neq \{0\}$ and $K_{1}-K_{2}=\mathbb{R}^{3}$, but 
	$c_{0}(K_{1}^{\perp},K_{2}^{\perp})=\scal{u_{1}}{u_{2}}<1=c_{0}(K_{1},K_{2})$.
\end{remark}

We now turn to a conical variant of \cref{f:Solmon}.
Our proof is an adaptation of Deutsch's proof of \cite[Theorem~16]{Deutsch1995}.

\begin{theorem} \label{theorem:cEQ}
	Let $K_{1}$ and $K_{2}$ be nonempty closed convex cones in $\mathcal{H}$.
	Suppose that $c(K_{1},K_{2}) <1$, that $K_{1} \cap K_{2}$  and
	$K_{1}^{\oplus} \cap K_{2}^{\ominus}$  are linear subspaces of
	$\mathcal{H}$, that $\overline{K_{1}^{\oplus} +(K_{1} \cap K_{2})} \cap
	(K_{1} \cap K_{2})^{\perp} =K_{1}^{\oplus}$, and that $\overline{K_{2}^{\ominus}
		+(K_{1} \cap K_{2})} \cap (K_{1} \cap K_{2})^{\perp} =K_{2}^{\ominus}$. Then
	\begin{align*}
	c(K_{1}, K_{2})  \leq  c(K_{1}^{\oplus}, K_{2}^{\ominus}).
	\end{align*}
	If additionally $c(K_{1}^{\oplus},K_{2}^{\ominus}) <1$,
	\begin{align*}
	\overline{K_{1} +(K_{1}^{\oplus} \cap K_{2}^{\ominus})}   \cap (K_{1}^{\oplus} \cap K_{2}^{\ominus})^{\perp}=K_{1}, \quad \text{and} \quad \overline{K_{2} +(K_{1}^{\oplus} \cap K_{2}^{\ominus})} \cap (K_{1}^{\oplus} \cap K_{2}^{\ominus})^{\perp}=K_{2},
	\end{align*}
	then
	\begin{align*}
	c(K_{1}, K_{2})  =  c(K_{1}^{\oplus}, K_{2}^{\ominus}).
	\end{align*}
\end{theorem}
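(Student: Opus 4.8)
The plan is to reduce both Friedrichs angles to Dixmier angles of suitably ``reduced'' cones and then to invoke the conical Hundal's Lemma (\cref{prop:KMoplus}) with a carefully engineered ambient set~$X$. Throughout I would write $G := K_{1} \cap K_{2}$ and $H := K_{1}^{\oplus} \cap K_{2}^{\ominus}$, both closed linear subspaces by hypothesis, and set $\tilde{K}_{i} := K_{i} \cap G^{\perp}$.

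First I would record the reduction formulas. Because $G$ is a linear subspace, $G^{\ominus}=G^{\perp}$ (\cref{fact:dualcone}\cref{fact:dualcone:linearsubsp}), so \cref{fact:coneK1K2} gives $c(K_{1},K_{2}) = c_{0}(\tilde{K}_{1},\tilde{K}_{2})$; the same fact applied to the pair $K_{1}^{\oplus},K_{2}^{\ominus}$ together with $H^{\ominus}=H^{\perp}$ gives $c(K_{1}^{\oplus},K_{2}^{\ominus}) = c_{0}(K_{1}^{\oplus}\cap H^{\perp}, K_{2}^{\ominus}\cap H^{\perp})$. Since $\tilde{K}_{1}\cap\tilde{K}_{2} = G\cap G^{\perp}=\{0\}$ and $c_{0}(\tilde{K}_{1},\tilde{K}_{2})=c(K_{1},K_{2})<1$, the conical Hundal's Lemma applies to $\tilde{K}_{1},\tilde{K}_{2}$. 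I would then compute, via \cref{fact:K1:Km:dualsum} and $(G^{\perp})^{\ominus}=G$, the identities $\tilde{K}_{1}^{\oplus}=\overline{K_{1}^{\oplus}+G}$ and $\tilde{K}_{2}^{\ominus}=\overline{K_{2}^{\ominus}+G}$.

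The crux is to run \cref{prop:KMoplus} with $X := G^{\perp}\cap H^{\perp}$. I would first verify $\tilde{K}_{1}-\tilde{K}_{2}\subseteq X$: inclusion in $G^{\perp}$ is clear since $\tilde{K}_{1},\tilde{K}_{2}\subseteq G^{\perp}$, while inclusion in $H^{\perp}$ follows once I identify $H^{\perp}=H^{\ominus}=\overline{(K_{1}^{\oplus})^{\ominus}+(K_{2}^{\ominus})^{\ominus}}=\overline{K_{2}-K_{1}}$ (using \cref{fact:K1:Km:dualsum} and the dual-cone identities $(K_{1}^{\oplus})^{\ominus}=-K_{1}$, $(K_{2}^{\ominus})^{\ominus}=K_{2}$ from \cref{lemma:polardual} and \cref{fact:dualcone}\cref{fact:dualcone:closedconvex}), a subspace containing $K_{1}-K_{2}\supseteq \tilde{K}_{1}-\tilde{K}_{2}$. \cref{prop:KMoplus} then yields $c_{0}(\tilde{K}_{1},\tilde{K}_{2})\le c_{0}(\tilde{K}_{1}^{\oplus}\cap X, \tilde{K}_{2}^{\ominus}\cap X)$. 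At this point the two closure hypotheses do the decisive work: from $\overline{K_{1}^{\oplus}+G}\cap G^{\perp}=K_{1}^{\oplus}$ I obtain $\tilde{K}_{1}^{\oplus}\cap X = (\overline{K_{1}^{\oplus}+G}\cap G^{\perp})\cap H^{\perp} = K_{1}^{\oplus}\cap H^{\perp}$, and symmetrically $\tilde{K}_{2}^{\ominus}\cap X = K_{2}^{\ominus}\cap H^{\perp}$. Chaining the reduction formulas then delivers $c(K_{1},K_{2})\le c(K_{1}^{\oplus},K_{2}^{\ominus})$.

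For the equality under the extra hypotheses, I would apply the inequality just proved to the pair $(K_{1}^{\oplus},K_{2}^{\ominus})$ in place of $(K_{1},K_{2})$. The point is that $(K_{1}^{\oplus})^{\oplus}=K_{1}$ and $(K_{2}^{\ominus})^{\ominus}=K_{2}$, so the dual-polar pair of $(K_{1}^{\oplus},K_{2}^{\ominus})$ is exactly $(K_{1},K_{2})$, the intersection $K_{1}^{\oplus}\cap K_{2}^{\ominus}=H$ is linear, and the four additional hypotheses ($c(K_{1}^{\oplus},K_{2}^{\ominus})<1$ together with $\overline{K_{1}+H}\cap H^{\perp}=K_{1}$ and $\overline{K_{2}+H}\cap H^{\perp}=K_{2}$) are precisely what the first part requires for this new pair. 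That application gives $c(K_{1}^{\oplus},K_{2}^{\ominus})\le c(K_{1},K_{2})$, and the two inequalities combine to the desired equality. I expect the main obstacle to be the bookkeeping around $X=G^{\perp}\cap H^{\perp}$: it must be large enough to contain $\tilde{K}_{1}-\tilde{K}_{2}$ (so that Hundal's Lemma applies) yet structured so that intersecting the enlarged polar cones $\overline{K_{i}^{\oplus}+G}$ with it collapses them --- via the closure hypotheses --- back to the reduced dual cones $K_{1}^{\oplus}\cap H^{\perp}$ and $K_{2}^{\ominus}\cap H^{\perp}$. Getting the identities $(K_{1}^{\oplus})^{\ominus}=-K_{1}$ and $H^{\perp}=\overline{K_{2}-K_{1}}$ exactly right is what makes both sides of this balance succeed.
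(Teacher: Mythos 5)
Your proof is correct and takes essentially the same route as the paper's: the same reduction of both Friedrichs angles to Dixmier angles via \cref{fact:coneK1K2}, the same ambient subspace $X=(K_{1}\cap K_{2})^{\perp}\cap(K_{1}^{\oplus}\cap K_{2}^{\ominus})^{\perp}$ fed into the conical Hundal Lemma (\cref{prop:KMoplus}), the same use of the two closure hypotheses to collapse $\tilde{K}_{1}^{\oplus}\cap X$ and $\tilde{K}_{2}^{\ominus}\cap X$ to $K_{1}^{\oplus}\cap(K_{1}^{\oplus}\cap K_{2}^{\ominus})^{\perp}$ and $K_{2}^{\ominus}\cap(K_{1}^{\oplus}\cap K_{2}^{\ominus})^{\perp}$, and the same role-swap applied to the pair $(K_{1}^{\oplus},K_{2}^{\ominus})$ for the equality part. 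All hypotheses are verified where needed, so there is nothing to fix.
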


\begin{proof} 
	\cref{fact:coneK1K2} and \cref{fact:dualcone}\cref{fact:dualcone:o} imply that
	\begin{align}\label{eq:prop:cEQ:leq:c0}
	c_{0} \big(  K_{1}  \cap (K_{1}\cap K_{2})^{\perp}, K_{2} \cap (K_{1}\cap K_{2})^{\perp} \big) =c(K_{1},K_{2}) <1.
	\end{align}
	Because $K_{1}^{\oplus} \cap   K_{2}^{\ominus}$ is a linear subspace of
	$\mathcal{H}$, \cref{fact:K1:Km:dualsum},
	\cref{lemma:polardual}\cref{lemma:polardual:llinear} and
	\cref{fact:dualcone}\cref{fact:dualcone:o}\&\cref{fact:dualcone:linearsubsp} imply
	that  $(K_{1}^{\oplus} \cap   K_{2}^{\ominus})^{\perp} =\overline{
		K_{1}-  K_{2} }$ is a linear subspace of $\mathcal{H}$. Hence
	\begin{align} \label{eq:prop:cEQ:X}
	X:=  (K_{1} \cap K_{2})^{\perp} \cap (K_{1}^{\oplus} \cap   K_{2}^{\ominus})^{\perp} =   (K_{1} \cap K_{2})^{\perp} \cap  \overline{ K_{1}-  K_{2} }
	\end{align}	
	is a closed linear subspace of $\mathcal{H}$.
	Because $\overline{ K_{1}-  K_{2} }$ is a closed linear subspace of $\mathcal{H}$,
	we learn that $K_{2}   \subseteq \overline{ K_{1}-  K_{2} }$
	and that $K_{1} \subseteq \overline{ K_{1}-  K_{2} }$.
	Hence
	\begin{align} \label{eq:prop:cEQ:KX}
	K_{1} \cap X =  K_{1} \cap (K_{1} \cap K_{2})^{\perp}
	\quad\text{and}\quad
	K_{2} \cap X =  K_{2} \cap (K_{1} \cap K_{2})^{\perp}.
	\end{align}
	Moreover,  $(K_{1} \cap K_{2})^{\perp}$ is a linear subspace and so $(K_{1}
	\cap K_{2})^{\perp} - (K_{1} \cap K_{2})^{\perp} = (K_{1} \cap
	K_{2})^{\perp}$.
	Combine this with \cref{eq:prop:cEQ:X} and
	\cref{eq:prop:cEQ:KX} to see that
	\begin{align} \label{eq:prop:cEQ:subseteqX}
	\big(K_{1} \cap (K_{1} \cap K_{2})^{\perp}\big)-
	\big(K_{2} \cap (K_{1} \cap K_{2})^{\perp}\big) =
	\big(K_{1} \cap X\big)  - \big(K_{2} \cap X\big) \subseteq X.
	\end{align}
	Using \cref{fact:K1:Km:dualsum},
	\cref{lemma:polardual}\cref{lemma:polardual:-C}$\&$\cref{lemma:polardual:oplus}
	and the assumption  that $\overline{K_{1}^{\oplus} +K_{1} \cap K_{2}} \cap (K_{1}
	\cap K_{2})^{\perp} =K_{1}^{\oplus}$,
	we obtain
	\begin{subequations}
		\label{eq:prop:cEQ:KXoplus}
		\begin{align}
		(K_{1} \cap (K_{1} \cap K_{2})^{\perp})^{\oplus} \cap X  &=  \overline{K_{1}^{\oplus} +K_{1} \cap K_{2}} \cap (K_{1} \cap K_{2})^{\perp} \cap (K_{1}^{\oplus} \cap   K_{2}^{\ominus})^{\perp}\\
		&=K_{1}^{\oplus}  \cap (K_{1}^{\oplus} \cap   K_{2}^{\ominus})^{\perp}.
		\end{align}
	\end{subequations}
	Similarly, using \cref{fact:K1:Km:dualsum},
	\cref{lemma:polardual}\cref{lemma:polardual:-C},
	\cref{fact:dualcone}\cref{fact:dualcone:closedconvex} and the assumption
	that $\overline{K_{2}^{\ominus} +K_{1} \cap K_{2}} \cap (K_{1} \cap
	K_{2})^{\perp} =K_{2}^{\ominus}$,
	we see that
	\begin{subequations}
		\label{eq:prop:cEQ:MXperp}
		\begin{align}
		(K_{2} \cap (K_{1} \cap K_{2})^{\perp})^{\ominus} \cap X &=  \overline{K_{2}^{\ominus} +K_{1} \cap K_{2}} \cap (K_{1} \cap K_{2})^{\perp} \cap (K_{1}^{\oplus} \cap   K_{2}^{\ominus})^{\perp} \\
		&=K_{2}^{\ominus}\cap(K_{1}^{\oplus} \cap   K_{2}^{\ominus})^{\perp}.
		\end{align}
	\end{subequations}
	We now use  \cref{eq:prop:cEQ:leq:c0}, \cref{eq:prop:cEQ:subseteqX}, and
	\cref{prop:KMoplus} (with $K_{1} = K_{1} \cap X$, $K_{2}=K_{2} \cap X$, and
	$X$ is as   \cref{eq:prop:cEQ:X}) to deduce that $c_{0} (  K_{1} \cap X ,
	K_{2} \cap X ) \leq c_{0} ((K_{1} \cap X)^{\oplus} \cap X,  (K_{2} \cap X
	)^{\ominus} \cap X)$, which, recalling \cref{eq:prop:cEQ:KX},
	\cref{eq:prop:cEQ:KXoplus}, and \cref{eq:prop:cEQ:MXperp}, is equivalent to
	\begin{align*}
	c_{0} (K_{1} \cap (K_{1} \cap K_{2})^{\perp} , K_{2} \cap (K_{1} \cap K_{2})^{\perp} ) \leq c_{0} \left(K_{1}^{\oplus}  \cap (K_{1}^{\oplus} \cap   K_{2}^{\ominus})^{\perp}, K_{2}^{\ominus}\cap(K_{1}^{\oplus} \cap   K_{2}^{\ominus})^{\perp}\right).
	\end{align*}
	This is	--- using \cref{fact:coneK1K2} and \cref{fact:dualcone}\cref{fact:dualcone:o} ---
	the same as $c(K_{1} ,K_{2} )  \leq c(K_{1}^{\oplus} , K_{2}^{\ominus})$.
	
	Finally, with the additional assumptions,
	apply the result just proved above  to
	$K_{1} =K_{1}^{\oplus}$ and $K_{2} =K_{2}^{\ominus}$ to conclude that
	$c(K_{1}^{\oplus} , K_{2}^{\ominus}) \leq  c(K_{1}^{\oplus \oplus} , K_{2}^{\ominus \ominus})  = c(K_{1},K_{2}) $, where the last equation
	follows from \cref{fact:dualcone}\cref{fact:dualcone:closedconvex}  and \cref{lemma:polardual}\cref{lemma:polardual:oplus}.
	Altogether, $c(K_{1},K_{2})  = c(K_{1}^{\oplus} , K_{2}^{\ominus})$ in this case.
\end{proof}

To derive some consequences of \cref{theorem:cEQ}, we require the following result.

\begin{lemma} \label{lemma:KMperp}
	Let $A$ and $B$ be nonempty subsets of  $\mathcal{H}$ such that
	$0 \in B$ and $A \subseteq B^{\perp}$.
	Then
	\begin{equation*}
	(A+B) \cap B^{\ominus} =A.
	\end{equation*}	
	Consequently, if $B$ is a linear subspace of $\mathcal{H}$,
	then $(A+B) \cap B^{\perp} =A$.
\end{lemma}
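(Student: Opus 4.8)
The plan is to prove the set equality $(A+B)\cap B^{\ominus}=A$ by a standard double inclusion, and then read off the linear-subspace consequence from the fact that $B^{\ominus}=B^{\perp}$ for subspaces.

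For the inclusion $A\subseteq(A+B)\cap B^{\ominus}$, I would argue as follows. Since $0\in B$, every $a\in A$ can be written as $a=a+0\in A+B$, so $A\subseteq A+B$. For the second membership, observe that $B^{\perp}\subseteq B^{\ominus}$: indeed, if $y\in B^{\perp}$ then $\innp{x,y}=0$ for all $x\in B$, so $\sup\innp{B,y}=0\leq 0$ (using $B\neq\varnothing$), whence $y\in B^{\ominus}$. Combining this with the hypothesis $A\subseteq B^{\perp}$ gives $A\subseteq B^{\ominus}$, and therefore $A\subseteq(A+B)\cap B^{\ominus}$.

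For the reverse inclusion $(A+B)\cap B^{\ominus}\subseteq A$, take any $z\in(A+B)\cap B^{\ominus}$ and write $z=a+b$ with $a\in A$ and $b\in B$. Since $z\in B^{\ominus}$ and $b\in B$, we have $\innp{b,z}\leq 0$. The key computation is then to expand $\innp{b,z}=\innp{b,a+b}=\innp{b,a}+\norm{b}^{2}$ and to kill the cross term: because $a\in A\subseteq B^{\perp}$ and $b\in B$, we get $\innp{b,a}=0$, so $\innp{b,z}=\norm{b}^{2}$. Hence $\norm{b}^{2}\leq 0$, which forces $b=0$ and therefore $z=a\in A$. This vanishing of the cross term via $A\subseteq B^{\perp}$ — turning the polar-cone inequality into $\norm{b}^{2}\leq 0$ — is the only real step; everything else is bookkeeping, so I do not anticipate a genuine obstacle here.

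Finally, for the consequence, if $B$ is a linear subspace then $B^{\ominus}=B^{\perp}$ by \cref{lemma:polardual}\cref{lemma:polardual:llinear} (equivalently \cref{fact:dualcone}\cref{fact:dualcone:linearsubsp}), and substituting into the equality just proved yields $(A+B)\cap B^{\perp}=(A+B)\cap B^{\ominus}=A$, as claimed.
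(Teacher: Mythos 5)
Your proposal is correct and follows essentially the same route as the paper's proof: double inclusion, using $0\in B$ and $A\subseteq B^{\perp}\subseteq B^{\ominus}$ for the easy direction, and for the reverse inclusion the same key computation $0\geq\innp{z,b}=\innp{a,b}+\norm{b}^{2}=\norm{b}^{2}$ forcing $b=0$, followed by $B^{\ominus}=B^{\perp}$ for linear $B$. The only (harmless) difference is that you spell out the inclusion $B^{\perp}\subseteq B^{\ominus}$, which the paper leaves implicit.
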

\begin{proof}
	Because  $0 \in B$ and
	$A \subseteq  B^{\ominus}$, we have that $ A \subseteq (A+B) \cap B^{\ominus}$.
	Conversely, let $x \in  (A+B) \cap B^{\ominus}$. Then there exist $a \in A$ and
	$b \in B$ such that  $x =a+b \in B^{\ominus}$.
	Because $x \in B^{\ominus}$,
	$b \in B$, and $a \in A \subseteq B^\perp = B^{\ominus} \cap B^{\oplus}$, we know that
	$\innp{x, b} \leq 0$, and that $\innp{a, b} =0$.
	Hence $0
	\geq \innp{x,b}=\innp{a+b,b}=\innp{a,b}+\norm{b}^{2} =\norm{b}^{2} \geq 0$
	so $\norm{b}^2 = 0$, i.e., $b=0$.
	Thus $x=a\in A$ and so
	$(A+B) \cap B^{\ominus} \subseteq A$.
	
	The ``Consequently'' part follows from what we just proved and
	\cref{lemma:polardual}\cref{lemma:polardual:llinear} which states that
	$B^{\perp}=B^{\ominus}=B^{\oplus}$ provided that $B$ is linear.
\end{proof}

We first reprove Solmon's results (\cref{f:Solmon}) from \cref{theorem:cEQ}.

\begin{corollary}{\bf (Solmon)} \label{cor:cEQ}
	Let  $K_{1}$ and $K_{2}$ be closed linear subspaces of $\mathcal{H}$. Then
	$c(K_{1}, K_{2})  =  c(K_{1}^{\perp}, K_{2}^{\perp})$.	
\end{corollary}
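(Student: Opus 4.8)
The plan is to specialize \cref{theorem:cEQ} to the case where $K_1$ and $K_2$ are closed linear subspaces, checking that every hypothesis of that theorem is either automatic or reduces to an orthogonality computation. First I would record, using \cref{lemma:polardual}\cref{lemma:polardual:llinear}, that $K_1^\oplus = K_1^\ominus = K_1^\perp$ and $K_2^\oplus = K_2^\ominus = K_2^\perp$, and that $K_1^{\oplus\oplus} = K_1$, $K_2^{\ominus\ominus} = K_2$; thus the dual-polar pair $(K_1^\oplus, K_2^\ominus)$ appearing in \cref{theorem:cEQ} is exactly $(K_1^\perp, K_2^\perp)$, which is what we want. Since $K_1 \cap K_2$ and $K_1^\perp \cap K_2^\perp = K_1^\oplus \cap K_2^\ominus$ are intersections of closed subspaces, they are closed subspaces, so the two linearity hypotheses of \cref{theorem:cEQ} hold automatically.

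Next I would verify the four ``dual-sum'' identities. Consider, for instance, $\overline{K_1^\perp + (K_1 \cap K_2)} \cap (K_1 \cap K_2)^\perp = K_1^\perp$. Because $K_1 \cap K_2 \subseteq K_1$, the subspace $K_1^\perp$ is orthogonal to $K_1 \cap K_2$, so the algebraic sum $K_1^\perp + (K_1 \cap K_2)$ is already closed (an orthogonal sum of closed subspaces is closed) and the closure bar is superfluous. Then \cref{lemma:KMperp}, applied with $A = K_1^\perp$ and $B = K_1 \cap K_2$ (note $0 \in B$ and $A \subseteq B^\perp$ since $K_1^\perp \subseteq (K_1 \cap K_2)^\perp$), yields the identity. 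The companion identity for $K_2^\perp$ is identical, and the two additional identities required for the equality part follow the same way with $B = K_1^\perp \cap K_2^\perp$ and $A \in \{K_1, K_2\}$, using $K_i \subseteq (K_1^\perp \cap K_2^\perp)^\perp$.

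With all the structural hypotheses in hand, whenever $c(K_1, K_2) < 1$ and $c(K_1^\perp, K_2^\perp) < 1$ I would simply invoke the equality part of \cref{theorem:cEQ} to conclude $c(K_1, K_2) = c(K_1^\perp, K_2^\perp)$. The main obstacle is the remaining case, where one of the two Friedrichs cosines equals $1$: here \cref{theorem:cEQ} does not apply, since it requires strict inequality. To close this gap I would appeal to the classical equivalence (available in Deutsch's survey \cite{Deutsch1995}) that for closed subspaces $M, N$ one has $c(M,N) < 1$ if and only if $M + N$ is closed, together with the self-dual fact that $M + N$ is closed if and only if $M^\perp + N^\perp$ is closed. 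These combine to give $c(K_1, K_2) < 1 \Leftrightarrow c(K_1^\perp, K_2^\perp) < 1$, so that either both cosines are strictly below $1$ (handled above) or both equal $1$, in which case $c(K_1, K_2) = 1 = c(K_1^\perp, K_2^\perp)$ trivially. Reconciling the unconditional statement of the corollary with the strict-inequality hypothesis of \cref{theorem:cEQ} is the one genuinely non-routine point; everything else is bookkeeping via orthogonality and \cref{lemma:KMperp}.
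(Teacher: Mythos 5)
Your proposal is correct and follows essentially the same route as the paper's own proof: specialize \cref{theorem:cEQ} using $K_i^\oplus = K_i^\ominus = K_i^\perp$, verify the dual-sum hypotheses via closedness of orthogonal sums of closed subspaces and \cref{lemma:KMperp}, and dispose of the $c=1$ case through the classical equivalence $c(K_1,K_2)<1 \Leftrightarrow K_1+K_2$ closed $\Leftrightarrow K_1^\perp+K_2^\perp$ closed $\Leftrightarrow c(K_1^\perp,K_2^\perp)<1$ from Deutsch's survey. The paper does exactly this, citing \cite[Theorem~2.13]{Deutsch1995} for the equivalence and \cite[Proposition~29.6]{BC2017} for the closedness of the orthogonal sums.
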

\begin{proof}
	Because  $K_{1}$  and $K_{2}$ are closed linear subspaces,
	by \cite[Theorem~13]{Deutsch1995},
	\begin{align}\label{eq:cor:cEQ}
	c(K_{1},K_{2})   =1 \Leftrightarrow   K_{1} +K_{2} \text{ is not closed} \Leftrightarrow K_{1}^{\perp} +K_{2}^{\perp} \text{ is not closed}  \Leftrightarrow c(K_{1}^{\perp},K_{2}^{\perp}) =1.
	\end{align}
	Assume that $c(K_{1},K_{2})   <1$.
	By \cref{eq:cor:cEQ},
	$c(K_{1}^{\perp},K_{2}^{\perp})   <1$.  Note that
	$K_{1}^{\perp}=K_{1}^{\ominus}=K_{1}^{\oplus}$, and
	$K_{2}^{\perp}=K_{2}^{\ominus}=K_{2}^{\oplus}$.  Moreover, because
	$K_{1}^{\perp} \perp (K_{1} \cap K_{2})$, then by
	\cite[Proposition~29.6]{BC2017}, $\overline{K_{1}^{\perp} +(K_{1} \cap K_{2})}
	=K_{1}^{\perp} +K_{1} \cap K_{2}$. Hence, by \cref{lemma:KMperp},
	$\overline{K_{1}^{\oplus} +K_{1} \cap K_{2}} \cap (K_{1} \cap K_{2})^{\ominus} =
	(K_{1}^{\perp} +K_{1} \cap K_{2} )\cap (K_{1} \cap K_{2})^{\ominus}
	=K_{1}^{\perp}=K_{1}^{\oplus}$. Similarly,
	$\overline{K_{2}^{\ominus} -K_{1} \cap K_{2}} \cap (K_{1} \cap K_{2})^{\ominus}
	=K_{2}^{\ominus}$.
	Replacing $(K_{1},K_{2})$ with $(K_{1}^{\perp},K_{2}^{\perp})$ in the above,
	we obtain $ \overline{K_{1} +(K_{1}^{\oplus} \cap K_{2}^{\ominus})}   \cap (K_{1}^{\oplus}
	\cap K_{2}^{\ominus})^{\perp}=K_{1}$ and $ \overline{K_{2} -(K_{1}^{\oplus} \cap
		K_{2}^{\ominus})} \cap (K_{1}^{\oplus} \cap K_{2}^{\ominus})^{\perp}=K_{2}$.
	Hence, by \cref{theorem:cEQ}, $c(K_{1}, K_{2})  =  c(K_{1}^{\oplus},
	K_{2}^{\ominus}) =c(K_{1}^{\perp}, K_{2}^{\perp})$.
\end{proof}

Next we give some special cases of \cref{theorem:cEQ}.
\begin{corollary} \label{cor:prop:cEQ}
	Let $K_{1}$ and $K_{2}$ be nonempty closed convex cones in $\mathcal{H}$ such that
	$c(K_{1},K_{2}) <1$, $K_{1} \cap K_{2}$  and
	$K_{1}^{\oplus} \cap K_{2}^{\ominus}$  are linear subspaces of
	$\mathcal{H}$, and $ K_{1}^{\oplus} +(K_{1} \cap K_{2}) $  and $
	K_{2}^{\ominus} +(K_{1} \cap K_{2}) $ are closed. Then
	\begin{equation*}
	c(K_{1}, K_{2})  \leq  c(K_{1}^{\oplus}, K_{2}^{\ominus}).
	\end{equation*}
	If $c(K_{1}^{\oplus}, K_{2}^{\ominus}) <1$,  and $K_{1}
	+(K_{1}^{\oplus} \cap K_{2}^{\ominus})$  and $K_{2} +(K_{1}^{\oplus} \cap
	K_{2}^{\ominus}) $ are closed, then
	\begin{equation*}
	c(K_{1}, K_{2})  =  c(K_{1}^{\oplus}, K_{2}^{\ominus}).
	\end{equation*}
\end{corollary}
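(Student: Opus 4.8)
The plan is to deduce this corollary directly from \cref{theorem:cEQ}: the corollary simply trades the four somewhat opaque closure-and-intersection identities appearing in the hypotheses of that theorem for the far more transparent assumptions that certain \emph{sums} of cones are closed. So the entire task is to show that, under the present hypotheses, those four identities hold automatically, after which \cref{theorem:cEQ} applies verbatim. The workhorse is \cref{lemma:KMperp}, which says that whenever $0 \in B$ and $A \subseteq B^{\perp}$ with $B$ a linear subspace, one has $(A+B) \cap B^{\perp} = A$.

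For the inequality $c(K_{1},K_{2}) \leq c(K_{1}^{\oplus}, K_{2}^{\ominus})$, I would verify the two identities $\overline{K_{1}^{\oplus} + (K_{1}\cap K_{2})} \cap (K_{1} \cap K_{2})^{\perp} = K_{1}^{\oplus}$ and $\overline{K_{2}^{\ominus} + (K_{1} \cap K_{2})} \cap (K_{1} \cap K_{2})^{\perp} = K_{2}^{\ominus}$ demanded by \cref{theorem:cEQ}. Since $K_{1}\cap K_{2}$ is assumed linear, the closures become superfluous the moment we know $K_{1}^{\oplus} + (K_{1} \cap K_{2})$ and $K_{2}^{\ominus} + (K_{1} \cap K_{2})$ are closed, which is exactly the present assumption. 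It then remains to apply \cref{lemma:KMperp} with $B = K_{1} \cap K_{2}$ (so $0 \in B$ and $B$ is linear) and $A = K_{1}^{\oplus}$ or $A = K_{2}^{\ominus}$. The only thing to check is the containment hypothesis $A \subseteq B^{\perp}$: from $K_{1} \cap K_{2} \subseteq K_{1}$ we get $K_{1}^{\ominus} \subseteq (K_{1} \cap K_{2})^{\ominus} = (K_{1} \cap K_{2})^{\perp}$, using \cref{lemma:polardual}\cref{lemma:polardual:llinear} for the linear subspace $K_{1} \cap K_{2}$, and negating gives $K_{1}^{\oplus} \subseteq (K_{1} \cap K_{2})^{\perp}$; the argument for $K_{2}^{\ominus}$ is identical, starting from $K_{1} \cap K_{2} \subseteq K_{2}$.

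For the equality, with the extra assumptions $c(K_{1}^{\oplus}, K_{2}^{\ominus}) < 1$ and closedness of $K_{1} + (K_{1}^{\oplus} \cap K_{2}^{\ominus})$ and $K_{2} + (K_{1}^{\oplus} \cap K_{2}^{\ominus})$, I would check the remaining two identities $\overline{K_{1} + (K_{1}^{\oplus} \cap K_{2}^{\ominus})} \cap (K_{1}^{\oplus} \cap K_{2}^{\ominus})^{\perp} = K_{1}$ and the analogue for $K_{2}$. Again the closures drop by the closedness assumption, and \cref{lemma:KMperp} applies with $B = K_{1}^{\oplus} \cap K_{2}^{\ominus}$ (linear by hypothesis) and $A = K_{1}$ or $A = K_{2}$. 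Here the containment $A \subseteq B^{\perp}$ is where linearity of $B$ really does the work: for $y \in K_{1}^{\oplus} \cap K_{2}^{\ominus}$ and $x \in K_{1}$ we have $\innp{x,y} \geq 0$ because $y \in K_{1}^{\oplus}$, while $-y \in K_{1}^{\oplus} \cap K_{2}^{\ominus}$ (as $B$ is a subspace) gives $\innp{x,y} \leq 0$, whence $\innp{x,y} = 0$ and $K_{1} \subseteq (K_{1}^{\oplus} \cap K_{2}^{\ominus})^{\perp}$; symmetrically, using $y, -y \in K_{2}^{\ominus}$, one obtains $K_{2} \subseteq (K_{1}^{\oplus} \cap K_{2}^{\ominus})^{\perp}$.

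With all four identities in hand, both conclusions follow immediately from \cref{theorem:cEQ}. There is no real obstacle beyond bookkeeping; the one point requiring genuine (if modest) care is extracting the orthogonality containments $K_{1}^{\oplus} \subseteq (K_{1} \cap K_{2})^{\perp}$ and $K_{1} \subseteq (K_{1}^{\oplus} \cap K_{2}^{\ominus})^{\perp}$ from the assumed linearity of the two intersection sets, since it is precisely linearity that upgrades the one-sided dual-cone inequalities into the two-sided orthogonality needed to invoke \cref{lemma:KMperp}.
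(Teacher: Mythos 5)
Your proposal is correct and follows essentially the same route as the paper's own proof: both reduce to \cref{theorem:cEQ} by using the closedness assumptions to drop the closure bars and then applying \cref{lemma:KMperp} four times with $B = K_{1}\cap K_{2}$ (for $A = K_{1}^{\oplus}, K_{2}^{\ominus}$) and $B = K_{1}^{\oplus}\cap K_{2}^{\ominus}$ (for $A = K_{1}, K_{2}$). The only cosmetic difference is that the paper obtains the containments $A \subseteq B^{\perp}$ by citing \cref{fact:dualcone}\cref{fact:dualcone:linearsubsp}, whereas you verify them by hand (order reversal of polarity, and the $\pm y$ argument); these are equivalent justifications of the same step.
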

\begin{proof}
	Because $K_{1} \cap K_{2}$  and $K_{1}^{\oplus} \cap K_{2}^{\ominus}$  are linear subspaces of $\mathcal{H}$, by \cref{fact:dualcone}\cref{fact:dualcone:linearsubsp}, we have that $ K_{1}^{\oplus} \subseteq (K_{1} \cap K_{2})^{\perp} $, $ K_{2}^{\ominus} \subseteq (K_{1} \cap K_{2})^{\perp} $, $K_{1} \subseteq (K_{1}^{\oplus} \cap K_{2}^{\ominus})^{\perp}$  and $K_{2} \subseteq (K_{1}^{\oplus} \cap K_{2}^{\ominus})^{\perp}$.
	
	Hence, applying \cref{lemma:KMperp} with $A =K_{1}^{\oplus}$ and $B= K_{1} \cap
	K_{2}$, with  $A =K_{2}^{\ominus}$ and $B= K_{1} \cap K_{2}$, with $A=K_{1}$ and
	$ B= K_{1}^{\oplus} \cap K_{2}^{\ominus}$, and with $A=K_{2}$ and $ B=
	K_{1}^{\oplus} \cap K_{2}^{\ominus}$,  we obtain that  $(K_{1}^{\oplus} +(K_{1}
	\cap K_{2})) \cap (K_{1} \cap K_{2})^{\perp} =K_{1}^{\oplus}$,
	$(K_{2}^{\ominus}
	+(K_{1} \cap K_{2})) \cap (K_{1} \cap K_{2})^{\perp} =K_{2}^{\ominus}$, 	
	$(K_{1} +(K_{1}^{\oplus} \cap K_{2}^{\ominus}))   \cap (K_{1}^{\oplus} \cap
	K_{2}^{\ominus})^{\perp}=K_{1}$, and $(K_{2} +(K_{1}^{\oplus} \cap
	K_{2}^{\ominus})) \cap (K_{1}^{\oplus} \cap K_{2}^{\ominus})^{\perp}=K_{2}$,
	respectively.
	Therefore, the required results follow from the closedness assumptions and  \cref{theorem:cEQ}.
\end{proof}

\begin{corollary}  \label{propo:cKM:Rn}
	Let $K_{1}$ and $K_{2}$ be  nonempty closed convex cones in $\mathcal{H}$.
	Suppose that $K_{1}$ or $K_{2}$ is contained in a finite-dimensional
	linear subspace of $ \mathcal{H} $, and that  $K_{1}^{\oplus} $  or
	$K_{2}^{\ominus} $ is  contained in a finite-dimensional linear subspace of $
	\mathcal{H} $. Suppose furthermore that    $K_{1} \cap K_{2} =\{0\}$ and $K_{1}^{\oplus}
	\cap K_{2}^{\ominus}=\{0\}$.
	Then
	\begin{align*}
	c_{0}(K_{1},K_{2}) =	c(K_{1},K_{2}) =
	c(K_{1}^{\oplus},K_{2}^{\ominus})=c_{0}(K_{1}^{\oplus},K_{2}^{\ominus}).
	\end{align*}
\end{corollary}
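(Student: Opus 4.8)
The plan is to read off the two ``outer'' equalities $c_0(K_1,K_2) = c(K_1,K_2)$ and $c(K_1^\oplus,K_2^\ominus) = c_0(K_1^\oplus,K_2^\ominus)$ directly from \cref{prop:cK1K2}, and to obtain the ``middle'' equality $c(K_1,K_2) = c(K_1^\oplus,K_2^\ominus)$ by checking that, under the present hypotheses, every side condition of \cref{theorem:cEQ} degenerates to a triviality. The entire content is the observation that the degenerate intersections $K_1\cap K_2 = \{0\}$ and $K_1^\oplus \cap K_2^\ominus = \{0\}$ make the bookkeeping conditions of \cref{theorem:cEQ} automatic.

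First I would record that $K_1^\oplus$ and $K_2^\ominus$ are themselves nonempty closed convex cones: $K_2^\ominus$ is closed by \cref{fact:dualcone}\cref{fact:dualcone:o}, while $K_1^\oplus = -K_1^\ominus$ by \cref{lemma:polardual}\cref{lemma:polardual:-C} is the negative of a closed convex cone and hence again a closed convex cone, and both contain $0$. For the outer equalities I would apply \cref{prop:cK1K2}\cref{prop:cK1K2:EQ} to the pair $(K_1,K_2)$ --- whose hypothesis that one cone lies in a finite-dimensional subspace is exactly the first containment assumption --- together with $K_1\cap K_2 = \{0\}$, to get $c_0(K_1,K_2) = c(K_1,K_2)$; part \cref{prop:cK1K2:c01:EQ} additionally gives $c_0(K_1,K_2) < 1$. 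Running the identical argument for $(K_1^\oplus,K_2^\ominus)$, using the second containment assumption and $K_1^\oplus \cap K_2^\ominus = \{0\}$, yields $c_0(K_1^\oplus,K_2^\ominus) = c(K_1^\oplus,K_2^\ominus)$ and $c_0(K_1^\oplus,K_2^\ominus) < 1$. By \cref{lemma:CD}\cref{lemma:CD:c0:c} these give $c(K_1,K_2) < 1$ and $c(K_1^\oplus,K_2^\ominus) < 1$, the strict inequalities needed to invoke both halves of \cref{theorem:cEQ}.

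The heart of the argument is then to verify the four technical side conditions. Since $K_1\cap K_2 = \{0\}$ and $K_1^\oplus \cap K_2^\ominus = \{0\}$, and $\{0\}$ is a linear subspace with $\{0\}^\perp = \mathcal{H}$, each condition collapses to the assertion that a closed convex cone equals its own closure: for instance $\overline{K_1^\oplus + (K_1\cap K_2)} \cap (K_1\cap K_2)^\perp = \overline{K_1^\oplus}\cap\mathcal{H} = K_1^\oplus$, and likewise for the conditions on $K_2^\ominus$, $K_1$, and $K_2$. With these identities and the two strict inequalities established, both conclusions of \cref{theorem:cEQ} apply and deliver $c(K_1,K_2) = c(K_1^\oplus,K_2^\ominus)$. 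Chaining this middle equality with the two outer equalities completes the proof.

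I do not anticipate a genuine obstacle, as the corollary is essentially a clean specialization of \cref{theorem:cEQ} to the case of trivial intersections. The only points requiring care are the routine verifications that $K_1^\oplus$ and $K_2^\ominus$ are legitimate nonempty closed convex cones, so that \cref{prop:cK1K2} and \cref{theorem:cEQ} may be applied to that pair, and the matching of the finite-dimensional containment hypotheses to the correct pair in each invocation of \cref{prop:cK1K2}.
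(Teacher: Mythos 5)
Your proof is correct and follows essentially the same route as the paper: both use \cref{prop:cK1K2} to obtain the outer equalities $c_0 = c$ together with the strict inequalities $c<1$ for each pair, and then reduce the middle equality to the conical Solmon variant. The only cosmetic difference is that the paper cites the intermediate \cref{cor:prop:cEQ} (whose closedness hypotheses hold trivially here, since adding $\{0\}$ leaves the closed cones unchanged), whereas you verify the collapsed hypotheses of \cref{theorem:cEQ} directly --- the same verification either way.
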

\begin{proof}
	Because $K_{1} \cap K_{2} =\{0\}$, $K_{1}^{\oplus} \cap K_{2}^{\ominus}=\{0\}$,
	using the assumptions that $K_{1}$ or $K_{2}$ is contained in a
	finite-dimensional linear subspace of $ \mathcal{H} $, and that
	$K_{1}^{\oplus} $  or $K_{2}^{\ominus} $ is  contained in a finite-dimensional
	linear subspace of $ \mathcal{H} $,   by
	\cref{prop:cK1K2}, we have that
	\begin{align*}
	c (K_{1},K_{2}) =	c_{0}(K_{1},K_{2}) <1 \quad \text{and} \quad   c (K_{1}^{\oplus},K_{2}^{\ominus}) = c_{0}(K_{1}^{\oplus},K_{2}^{\ominus})<1.
	\end{align*}
	Hence, the desired identities are directly from \cref{cor:prop:cEQ}.
\end{proof}

Here is a simple illustration of  \cref{propo:cKM:Rn}.

\begin{example}
	\label{ex:210508a}
	Suppose that
	$\mathcal{H} =\mathbb{R}^{2}$, and
	set $K_{1} :=\mathbb{R}(1,0)$ and $K_{2} :=\mathbb{R}(1,1)$.
	Then
	\begin{equation*}
	c_{0}(K_{1},K_{2}) =	c(K_{1},K_{2}) =
	c(K_{1}^{\oplus},K_{2}^{\ominus})=c_{0}(K_{1}^{\oplus},K_{2}^{\ominus})
	=\tfrac{1}{\sqrt{2}}.
	\end{equation*}
\end{example}

The next example follows easily from the definitions and \cref{lemma:CD}\cref{lemma:CD:--}.
We will then comment on its relevance to previous results.

\begin{example} \label{example:KM}
	Suppose that $\mathcal{H} =\mathbb{R}^{2}$,
	and set  $K:=\{ (x_{1},x_{2}) \in \mathbb{R}^{2} ~|~ x_{2} \geq x_{1} \geq 0  \}$
	and $M:=\mathbb{R}(1,0)$.
	Then
	$K\cap M =\{0\}$, $K^{\ominus} =\{ (x_{1},x_{2}) \in \mathbb{R}^{2} ~|~
	-x_{1} \geq x_{2} \text{ and } x_{2} \leq 0 \}$, $M^{\perp}=\mathbb{R}(0,1)$,
	$K^{\ominus} \cap M^{\perp}= \mathbb{R}_{+}(0,-1)$,
	$(K^{\ominus} \cap M^{\perp} )^{\ominus} = \mathbb{R}\times\mathbb{R}_+$, and
	\begin{align*}
	0&= c(K^{\oplus},M^{\perp})= c(K^{\ominus},M^{\perp}) \\
	&<\tfrac{1}{\sqrt{2}} = c_{0}(K,M)=c(K,M)\\
	&<1 = c_{0}(K^{\oplus},M^{\perp})=c_{0}(K^{\ominus},M^{\perp}).
	\end{align*}
\end{example}

\begin{remark}{\bf (impossiblility of a simple extension of Solmon's Fact)}
	\cref{example:KM} very clearly shows the impossibility of
	a nice and simple generalization of \cref{f:Solmon} from
	two subspaces to even just a subspace and a cone:
	Indeed,
	in  \cref{example:KM}, we have
	\begin{equation*}
	c  (K^{\ominus}, M^{\perp}) =c (K^{\oplus}, M^{\perp})
	< c(K,M);
	\end{equation*}
	thus, neither the polar cone $K^\ominus$ nor the dual cone $K^\oplus$ will
	do the job!
\end{remark}

\begin{remark} {\bf (importance of linearity of $K_1^\oplus \cap K_2^\ominus$)}
	Let us revisit  \cref{theorem:cEQ} with
	$K_{1}$ and $K_{2}$  replaced by the $M$ and $K$
	from \cref{example:KM}, respectively.
	Then
	$c(K_1,K_2) = \tfrac{1}{\sqrt{2}} < 1$
	and
	$K_1\cap K_2 = \{0\}$.
	The latter clearly implies
	$\overline{K_{1}^{\oplus} +(K_{1} \cap K_{2})} \cap
	(K_{1} \cap K_{2})^{\perp} = K_{1}^{\oplus}$
	and
	$\overline{K_{2}^{\ominus}
		+(K_{1} \cap K_{2})} \cap (K_{1} \cap K_{2})^{\perp} =K_{2}^{\ominus}$.
	However,
	$K_1^\oplus \cap K_2^\ominus =  \mathbb{R}_+(0,-1)$ and so
	\begin{equation*}
	\text{$K_1^\oplus \cap K_2^\ominus$ is \emph{not} a linear subspace,
		and $c(K_1,K_2)>c(K_1^\oplus,K_2^\ominus)$.}
	\end{equation*}
	This shows that the assumption that $K_1^\oplus \cap K_2^\ominus$ be a linear subspace
	is critical in \cref{theorem:cEQ}.
	The very same example can be used to show the importance of the
	assumption that $K_1^\oplus \cap K_2^\ominus$ be a linear subspace
	in \cref{cor:prop:cEQ}.
\end{remark}

\begin{remark}{\bf (assumptions in \cref{theorem:cEQ} are quite restrictive)}
	\label{rem:theorem5.5R2}
	Now suppose that $\mathcal{H} =\mathbb{R}^{2}$. Let us assume that $K_{1}$ and $K_{2}$
	are two nonempty closed convex cones in $\mathbb{R}^{2}$ such that $K_{1}$
	or $K_{2}$ is not a linear subspace. Furthermore, assume that $K_{1} \cap K_{2} $ is
	a linear subspace.
	By \cref{theorem:KominusMperpNeq0}\cref{theorem:KominusMperpNeq0:K}, the
	intersections
	$K_{1} \cap K_{2} $ and $K_{1}^{\oplus} \cap K_{2}^{\ominus}$ cannot be both
	equal to $\{(0,0)\}\subseteq \mathbb{R}^2$.
	Because $K_{1}^{\oplus \oplus} \cap K_{2}^{\ominus  \ominus}
	= K_{1} \cap K_{2}$, we assume without loss of generality that  $K_{1} \cap
	K_{2} =\{0\}\times \mathbb{R}$.
	Without loss of generality, we can only have one of the following
	two cases:
	\emph{Case~1}: $K_{1}=\mathbb{R}_+\times\mathbb{R}$ and $K_{2}=\mathbb{R}_-\times\mathbb{R}$;
	\emph{Case~2}:
	$K_{1}= \{0\}\times\mathbb{R}$ and  and $K_{2}=\mathbb{R}_-\times\mathbb{R}$.
	But in either case, we have
	$K_{1}^{\oplus} \cap K_{2}^{\ominus} =\mathbb{R}_{+}\times\{0\}$
	which is \emph{not} a linear subspace.
	
	Therefore, in the Euclidean plane $\mathbb{R}^{2}$, there do not exist two nonlinear cones satisfying
	the assumptions in \cref{theorem:cEQ}!
\end{remark}

\cref{rem:theorem5.5R2} and \cref{l:difference} now prompt
the following natural question with which we conclude this paper:

\begin{question}
	Do there exist nonempty closed convex cones $K_1$ and $K_2$
	that are \emph{nonlinear} yet satisfy
	the assumptions in \cref{theorem:cEQ}?
\end{question}

\vskip 6mm
\noindent{\bf Acknowledgments} 	

\noindent   The authors thank the editor and two anonymous reviewers for their 
helpful and constructive comments.
HHB and XW were partially supported by NSERC Discovery Grants.

\end{document}